\documentclass[a4paper,12pt]{article}

\usepackage{times}
\usepackage{amsfonts}
\usepackage{amsthm}
\usepackage{amsmath}
\usepackage{epic}
\usepackage{times}

\usepackage[margin=3cm]{geometry}
\linespread{1.1}
\usepackage{amsthm,amsmath}
\usepackage{fullpage}

\usepackage{tikz}
\usetikzlibrary{snakes}
\usepackage{soul}

\newcommand{\bt}{\mathbf{t}}
\newcommand{\bw}{\omega}%\textbf{w}}

\newcommand{\nats}{{\mathbb N}}
\newcommand{\ints}{{\mathbb Z}}
\newcommand{\reals}{{\mathbb R}}

\newcommand{\abp}{\rho}

\def\tribo{ {\bf t }}

\def\balpha{\bar\alpha}

\newtheorem{theorem}{Theorem}[section]
\newtheorem{lemma}[theorem]{Lemma}

\newtheorem{proposition}[theorem]{Proposition}
\newtheorem{definition}[theorem]{Definition}
\newtheorem{fact}[theorem]{Fact}

\theoremstyle{definition}	 

\newtheorem{remark}[theorem]{Remark}
\newtheorem{problem}{Open problem}

\newcommand{\ignore}[1]{}

\begin{document}

\title{Balance and Abelian Complexity of the {T}ribonacci word}
\author{Gw{\'e}na{\"e}l Richomme%
\footnote{Universit\'e de Picardie Jules Verne, Laboratoire MIS (Mod\'elisation, Information, Syst\`emes), 33, Rue Saint Leu, F-80039 Amiens cedex 1, FRANCE,
E-mail: \texttt{gwenael.richomme@u-picardie.fr}}
\and 
Kalle Saari%
\footnote{Department of Mathematics,
University of Turku,
FI-20014, Finland,
email: \texttt{kasaar@utu.fi}}
\and
Luca Q. Zamboni%
\footnote{
Universit\'e de Lyon, Universit\'e Lyon 1, CNRS UMR 5208 Institut Camille Jordan, B\^atiment du
Doyen Jean Braconnier, 43, blvd du 11 novembre 1918, F-69622 Villeurbanne Cedex, France, email: \texttt{zamboni@math.univ-lyon1.fr}
 \&  Reykjavik University, School of Computer Science, Kringlan 1, 103 Reykjavik, Iceland,
email: \texttt{lqz@ru.is}}
}

\maketitle

\abstract{G.~Rauzy showed that the Tribonacci minimal subshift  generated by  the morphism $\tau:$  $0\mapsto 01,$ $1\mapsto 02$ and $2\mapsto 0$  is 
measure-theoretically conjugate to an exchange of three fractal 
domains on a compact set in $\reals^2,$ each domain being translated by the same vector modulo a lattice.  In this paper we study the Abelian complexity $\abp(n)$ of the Tribonacci word $\tribo$ which is the unique fixed point of $\tau.$ We show that $\abp(n)\in\{3,4,5,6,7\}$ for each $n\geq 1,$ and that each of these five values is assumed. Our proof relies on the fact that the Tribonacci word is $2$-balanced, i.e., for all factors $U$ and $V$ of $\tribo$ of equal length, and for every letter $a\in \{0,1,2\},$ the number of occurrences of $a$ in $U$ and the number of occurrences of $a$ in $V$ differ by at most $2.$  While this result is announced in several papers, to the best of our knowledge no proof of this fact has ever been published. We offer two very different proofs of the $2$-balance property of $\tribo.$ The first uses the word combinatorial properties of the generating morphism, while the second exploits the spectral properties of the incidence matrix of $\tau.$}

\section{\label{overview}Introduction}

Given a finite non-empty set $A,$ called the {\it alphabet}, we denote by $A^*$ the free monoid generated by $A.$
The identity element of $A^*,$ called the {\it empty word}, will be denoted by 
$\varepsilon.$  For any word $u=a_1a_2\cdots a_n \in A^*,$ the length
of $u$ is the quantity $n$ and is denoted by $|u|.$ By convention, the length
of the empty word $\varepsilon$ is taken to be $0.$
For each $a\in A,$ let $|u|_a$  denote the number of occurrences of the letter $a$ in $u.$  We denote by $A^\nats$ the set of (right) infinite words on the alphabet $A.$ Given an infinite word  $\omega =\omega_0\omega_1\omega_2\cdots \in A^{\nats},$  any finite word of the form $\omega_{i}\omega_{i+1}\cdots \omega_{i+n-1}$ (with $i \geq 0$ and $n\geq 1$) is called a {\it factor} of $\omega.$ 
Let 
\[{\mathcal F}_{\omega}(n)=\{\omega_{i}\omega_{i+1}\cdots\omega_{i+n-1}\,|\,i\geq 0\}\]
denote the set of all factors of $\omega$ of length $n,$ and set $p_{\omega}(n)=\mbox{Card}({\mathcal F}_{\omega}(n)).$ The function $p_{\omega}:\nats \rightarrow \nats$ is called the {\it subword complexity function} of $\omega.$ A fundamental result due to Hedlund
and Morse \cite{MoHe1} states that a word $\omega $ is ultimately 
periodic if and only if for some $n$ the subword complexity $p_{\omega 
}(n)\leq n.$  Words of subword complexity $p(n)=n+1$ are called 
{\it Sturmian words.} The most well-known Sturmian word is the 
so-called Fibonacci word 
\[{\bf f}=01001010010010100101001001010010
010100101001001010010\cdots\]
fixed by the morphism $0\mapsto 01$ and $1\mapsto 0.$ 

Sturmian words admit various types of 
characterizations of geometric and combinatorial nature. We give two examples: 
In \cite 
{MorHed1940} Hedlund and Morse showed that each Sturmian word may be 
realized measure-theoretically by an irrational rotation on the 
circle. That is, every Sturmian word is obtained by coding the 
symbolic orbit of a point $x$ on the circle (of circumference one) 
under a rotation by an irrational angle $\alpha $ where the circle is 
partitioned
into two complementary intervals, one of length $\alpha $ and the 
other of length $1-\alpha .$
And conversely each such coding gives rise to a Sturmian word.  

Sturmian words are equally characterized by a certain balance property. We begin with the following definition: 

\begin{definition} An infinite word $\omega \in A^\nats$ is said to be  $C$-{\it balanced}  ($C$ a positive integer) if   $\bigl||U|_a-|V|_a\bigr|\leq C$ for all factors $U$ and $V$ of $\omega$ of equal length, and each $a\in A.$ We say $\omega$ is  {\it balanced} if it is $1$-balanced.
\end{definition}

\begin{theorem}[Theorem 2.1.5 in \cite{Lothaire2002}] An infinite word $\omega $
is Sturmian if and only if $\omega $ is a binary  
aperiodic balanced word.
\end{theorem}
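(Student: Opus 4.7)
The plan is to prove the equivalence by funnelling both directions through a standard auxiliary equivalence for binary words: $\omega$ is \emph{not} balanced if and only if there exists a finite word $w$ (possibly empty) such that both $0w0$ and $1w1$ occur as factors of $\omega$. The backward direction of this auxiliary equivalence is immediate since the factors $0w0$ and $1w1$ have the same length and differ by $2$ in their count of $0$'s. The forward direction I would obtain by taking two factors $U,V$ of equal length with $|U|_0-|V|_0\ge 2$, minimal in length among such imbalanced pairs, and then stripping equal initial and final letters; what remains is forced (by minimality) to be a pair of the form $0w0$ and $1w1$.

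For the forward direction ($\Rightarrow$), $p_\omega(1)=2$ shows the alphabet has exactly two letters, and $p_\omega(n)=n+1>n$ combined with Hedlund--Morse gives aperiodicity. For balance I would argue by contradiction via the auxiliary equivalence: if $0w0$ and $1w1$ are both factors, I would pick such a $w$ of minimal length. Since $\omega$ is Sturmian, $p_\omega(n+1)-p_\omega(n)=1$ for every $n$, so there is a unique right-special factor of each length. The coexistence of $0w0$, $1w1$ (together with $0w1$ or $1w0$, which must appear because both $0w$ and $1w$ are themselves extendable on the right) would produce two distinct right-special factors of length $|w|+1$, yielding the desired contradiction.

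For the backward direction ($\Leftarrow$), aperiodicity plus Hedlund--Morse gives $p_\omega(n)\ge n+1$. For the matching upper bound I use $p_\omega(n+1)-p_\omega(n)=\#\{\text{right-special factors of length }n\}$, so it suffices to show at most one right-special factor exists at each length. Assume for contradiction $u\neq v$ are both right-special of length $n$. Then all four words $u0,u1,v0,v1$ are factors of $\omega$. Writing $u$ and $v$ aligned to the right and locating the rightmost position at which they differ, I extract a common suffix $w$ preceded by different letters in $u$ and in $v$. Combined with the fact that both $u$ and $v$ extend on the right by both $0$ and $1$, this configuration produces factors $0w0$ and $1w1$, contradicting balance via the auxiliary equivalence.

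The main obstacle is the last extraction step: translating ``two distinct right-special factors'' into a concrete $0w0$, $1w1$ pair (and symmetrically, the minimality argument in the forward direction). These are standard but delicate combinatorial manipulations that rely crucially on the alphabet being binary, so that ``differing'' letters at any aligned position are forced to be the two letters $0$ and $1$, with no third possibility available to disrupt the construction.
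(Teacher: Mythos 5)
First, note that the paper itself gives no proof of this statement: it is quoted as Theorem 2.1.5 of Lothaire's \emph{Algebraic Combinatorics on Words} and used purely as background, so there is no internal argument to compare yours against and I can only assess the proposal on its own terms. Your backward direction (binary, aperiodic and balanced $\Rightarrow$ Sturmian) is sound and is the standard argument: two distinct right-special factors $u\neq v$ of the same length, compared from the right, yield a common suffix $w$ with $0w$ a suffix of one and $1w$ a suffix of the other; right-specialness of both then makes $0w0$ and $1w1$ factors, contradicting balance via the easy half of your auxiliary equivalence. Combined with the fact that on a binary alphabet $p_\omega(n+1)-p_\omega(n)$ equals the number of right-special factors of length $n$, and with Morse--Hedlund for the lower bound, this gives $p_\omega(n)=n+1$.

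The forward direction, however, has a genuine gap at precisely the step you single out. From $0w0$ and $1w1$ being factors you cannot conclude that ``$0w1$ or $1w0$ must appear'': right-extendability of $0w$ is already witnessed by $0w0$, so nothing forces the extension $0w1$. Worse, even if one of $0w1$, $1w0$ did occur, you would obtain only \emph{one} right-special factor among $\{0w,1w\}$ (you need all four of $0w0$, $0w1$, $1w0$, $1w1$ to get two), and a single right-special factor of length $|w|+1$ is perfectly consistent with the Sturmian condition --- indeed the unique right-special factor of that length must have the form $aw$, since its length-$|w|$ suffix is right-special and hence equals $w$. So no contradiction is reached, and the minimality of $|w|$, which you invoke but never use, is where the real work lies: the classical proofs either show that the minimal such $w$ is a palindrome and exploit that, or run a more delicate induction. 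Separately, in the auxiliary equivalence your stripping argument only produces a minimal pair of the form $0u0$, $1v1$; forcing $u=v$ requires the longest-common-prefix case analysis (one case yields a shorter pair $0p0$, $1p1$, the other a shorter imbalanced pair of suffixes), which you acknowledge but do not supply. Since the backward direction uses only the trivial half of that equivalence, the harder implication of the theorem is the one left unproved.
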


In \cite{Ra1,Ra2}, Rauzy showed that the regular continued fraction algorithm  provides  a formidable link between the arithmetic/diophantine  
properties
of an irrational number $\alpha$, the ergodic/dynamical properties of 
a rotation by angle $\alpha$ on the circle, and the combinatorial (balance) properties of Sturmian words. 

A fundamental
problem is to generalize and extend this rich interaction to 
higher dimensions,
either by starting with a specified symbolic dynamical system, or by a  class of 
words satisfying particular combinatorial conditions. 
In case of dimension $2,$ there are a couple of different dynamical  systems  which are natural candidates: For instance, 
promising results have been obtained by the third author together with  Ferenczi and Holton
by considering the dynamics of $3$-interval exchange 
transformations on the unit
interval \cite{FeHoZa1, FeHoZa2, FeHoZa3, FeHoZa4}, and also by Arnoux, Berth\'e and Ito by considering two rotations on the
circle \cite{ArBeIt}.
However, ever since the early work of Rauzy in \cite{Ra3}, the most 
natural generalization
was thought to be the one stemming from a
rotation on the $2$-torus. In this context, the associated symbolic 
counterpart is given by a class of words of subword complexity $2n+1,$ now called {\it Arnoux-Rauzy words}, originally introduced by Arnoux and Rauzy in \cite{ArRa}. 
In this setting, the {\it Tribonacci word}
\[\tribo =\lim_{n\rightarrow \infty} \tau^{n}(0)= 01020100102010 \cdots\]
defined as the fixed point of the morphism $\tau$
\[0\mapsto 01\,\,\,\,\,\,1\mapsto 02\,\,\,\,\,\,2\mapsto 0,\]
is the natural analogue of the Fibonacci word ${\bf f}.$
In fact, in \cite{Ra3}  Rauzy  showed that the Tribonacci minimal subshift (the shift orbit closure of  $\tribo$) is a 
natural  coding of a rotation on the $2$-dimensional torus $T^2,$ i.e., is 
measure-theoretically conjugate to an exchange of three fractal 
domains on a compact set in $\reals^2,$ each domain being translated by the same vector modulo a lattice. 
In \cite{CassFerZam}, the third author together with Cassaigne and  Ferenczi showed that
there exist Arnoux-Rauzy words which are arbitrarily imbalanced, that is to say, which are not $C$-balanced for any positive integer $C,$ and that such Arnoux-Rauzy words cannot be measure-theoretically conjugate to a rotation on the $2$-torus. 

In the first part of this paper, we prove:
 \begin{theorem}\label{2bal} The Tribonacci word $\tribo$ is $2$-balanced.
\end{theorem}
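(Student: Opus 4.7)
My plan is to exploit the desubstitution structure induced by $\tau$, combined with an induction on factor length, to reduce balance for long factors to balance for short ones. The starting point is a recognizability statement: every sufficiently long factor $W$ of $\tribo$ admits a unique decomposition $W=s\,\tau(W')\,p$, where $W'$ is again a factor of $\tribo$, $s\in\{\varepsilon,1,2\}$, and $p\in\{\varepsilon,0\}$ (the proper suffixes and proper prefixes of $\tau(0)=01$, $\tau(1)=02$, $\tau(2)=0$). The choice of $s$ (resp.\ $p$) is read off from the first two (resp.\ last two) letters of $W$ via a short case analysis using the rules of $\tau$. Letting $\Psi(W)=(|W|_0,|W|_1,|W|_2)^{T}$ and letting $M$ denote the incidence matrix of $\tau$, the decomposition yields
\[
\Psi(W) \;=\; \Psi(s) + M\,\Psi(W') + \Psi(p).
\]

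Given two factors $U,V$ of $\tribo$ of equal length, I would desubstitute each, $U=s_U\,\tau(U')\,p_U$ and $V=s_V\,\tau(V')\,p_V$, and compare Parikh vectors. Because the boundary words have length at most one, $|U'|$ and $|V'|$ differ by at most a small constant; by removing or adjoining at most one letter at a boundary of $V'$ one obtains a factor $\tilde V'$ of $\tribo$ of length exactly $|U'|$. Assuming inductively that $\tribo$ is $2$-balanced on factors of length $|U'|$, the vector $\Psi(U')-\Psi(\tilde V')$ has all entries bounded in absolute value by $2$. The recurrence above then expresses $\Psi(U)-\Psi(V)$ as $M\bigl(\Psi(U')-\Psi(\tilde V')\bigr)$ plus contributions from the boundary words $s_U,s_V,p_U,p_V$ and from the adjustment $\tilde V'\to V'$. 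A finite computation dispatches the small base cases.

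The main obstacle is closing the induction at the quantitatively sharp constant $C=2$ rather than at some larger value: a naive application of the recurrence loses a factor through $M$ (whose largest row sum is $3$) and adds boundary error terms, so the inductive bound threatens to grow. Overcoming this requires a careful tracking of the interplay between $\Psi(s_U)-\Psi(s_V)$, $\Psi(p_U)-\Psi(p_V)$, and the adjustment $\Psi(V')-\Psi(\tilde V')$, using the combinatorial constraints on which pairs of boundary words can actually co-occur (dictated by the legal left- and right-extensions of factors of $\tribo$) to show that these quantities compensate so that the $2$-bound is preserved at each step. The alternative spectral route announced in the abstract attacks the same bottleneck differently: $M$ has one Perron eigenvalue $\beta\approx 1.839$ and two complex eigenvalues $\lambda,\bar\lambda$ with $|\lambda|=\beta^{-1/2}<1$, so projecting Parikh vectors onto the contracting plane and using contractivity of $M$ there shows that this projection stays uniformly bounded over all factors of $\tribo$; converting an explicit diameter bound for the resulting set into the sharp constant $2$ is the quantitative heart of that second proof.
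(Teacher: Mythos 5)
Your proposal correctly identifies the two tools the paper uses (desubstitution via $\tau$ together with the incidence matrix $M$, and alternatively the spectral decomposition of $M$), but in both routes it stops exactly at the point where the proof has to be done, and the forward induction you set up does not close. Concretely: writing $U=s_U\tau(U')p_U$, $V=s_V\tau(V')p_V$ and assuming $2$-balance at the shorter length, the difference $\Psi(U)-\Psi(V)$ equals $M\bigl(\Psi(U')-\Psi(\tilde V')\bigr)$ plus the boundary contributions $\Psi(s_U)-\Psi(s_V)$, $\Psi(p_U)-\Psi(p_V)$, and $M\bigl(\Psi(\tilde V')-\Psi(V')\bigr)$. (Note the amplification is not where you locate it: since $\Psi(U')-\Psi(\tilde V')$ has coordinate sum zero, the first row $(1,1,1)$ of $M$ contributes $0$ and $M$ merely permutes the remaining entries, so $\|M d\|_\infty\leq\|d\|_\infty$ there.) The damage comes entirely from the boundary terms, which in the worst case add about $3$ to individual coordinates, so the induction yields roughly $5$-balance, not $2$-balance. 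You acknowledge this and appeal to a hoped-for ``compensation'' among the boundary words, but you give no mechanism for it, and that compensation is precisely the theorem. The same is true of your spectral sketch: boundedness of the projection onto the contracting plane is standard and only gives \emph{some} constant $C$; getting $C=2$ requires the explicit numerical estimates that you defer.

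For comparison, the paper's combinatorial proof runs the desubstitution in the opposite direction, as a descent on a minimal counterexample rather than a forward induction, and this is what makes the constant $2$ tractable. One takes a shortest pair $U,V$ with $|U|=|V|$ and $|U|_i-|V|_i=3$ (minimality forces the excess to be exactly $3$), and uses the coordinate shift $\Psi(\tau(u))=(|u|,|u|_0,|u|_1)$ to transport the imbalance: an excess of $3$ in letter $2$ (resp.\ $1$) of $U,V$ becomes an excess of $3$ in letter $1$ (resp.\ $0$) of the preimages $u,v$, while an excess in letter $0$ becomes a length discrepancy between $u$ and $v$ which, combined with $|U|=|V|$, forces an excess of at least $3$ in letter $2$ of a strictly shorter pair. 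The delicate part --- which your sketch has no analogue of --- is that for $i\in\{1,2\}$ the preimages $u,v$ need not have equal length, and one must pin down the boundary defects $\delta_1,\delta_2\in\{-1,0,1\}$ case by case (in the paper's Case 3 this requires a second level of desubstitution and the auxiliary Lemma~\ref{L1}) to manufacture an equal-length $2$-imbalanced pair shorter than $|U|$. As it stands, your argument establishes at best that $\tribo$ is $C$-balanced for some unspecified $C$, which is already known for any primitive substitution fixed point; the content of the theorem is the value $2$, and that is the part missing.
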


While this result is announced in several papers (see
 for instance \cite{Ber1, Ber2, CassFerZam, GlenJustin2009, Vuillon2003BBMS}), to the best of our knowledge no proof of this fact has ever been published. In this paper we give two different proofs of Theorem~\ref{2bal}. The first is a proof by induction which uses the word combinatorial properties of the generating morphism $\tau.$ The second proof relies on the spectral properties of the incidence matrix and is more in the spirit of the methods developed by Adamczewski in \cite{Adam}. 
 
 Our hope is that one of the two proofs may be extended to establish a balance property for the general $m$-bonacci word ($m\geq 2$) defined as the fixed point of the morphism
 \[0\mapsto 01\,\,\,\,\,\,1\mapsto 02\,\,\,\,\,\,\cdots \,\,\,\,\,\,(m-2)\mapsto 0(m-1)\,\,\,\,\,\,(m-1)\mapsto 0.\]
Computer simulations show that the $4$-bonacci word is not $2$-balanced.
In fact, let $u$ be the factor of the $4$-bonacci word of length $3305$ occurring in position $2663$ (starting with $0$), and 
$v$ the factor of length $3305$ occurring in position $9048.$ Then $|u|_1 = 891$ and $|v|_1 = 888,$ which shows that the $4$-bonacci word is not $2$-balanced.
It is possible that in general the $m$-bonacci word is $(m-1)$-balanced as stated in
\cite{GlenJustin2009}.

We then apply Theorem~\ref{2bal} to study the so-called Abelian complexity
of the Tribonacci word. 
Following \cite{RichommeSaariZamboni2009abelian}, two words $u$ and $v$ in $A^*$ are said to be {\it Abelian equivalent,} denoted $u\sim_{\mbox{ab}} v,$  if and only if $|u|_a=|v|_a$ for all $a\in A.$ For instance, the words $ababa$ and $aaabb$ are Abelian equivalent whereas they are not equivalent to $aabbb$.  It is readily verified that $\sim_{\mbox{ab}}$ defines an equivalence relation on $A^*.$

We define \[{\mathcal F}^{\mbox{ab}}_{\omega}(n)={\mathcal F}_{\omega}(n)/\!\!\sim_{\mbox{ab}}\] and set $\abp_{\omega}=\mbox{Card}({\mathcal F}^{\mbox{ab}}_{\omega}(n)).$ 
The function
$\abp =\abp_{\omega} :\nats \rightarrow \nats$ which counts the number of pairwise non Abelian equivalent factors of $\omega$ of length $n$ is called the {\it Abelian complexity} or {\it ab-complexity} for short. 
Using  Theorem~\ref{2bal} we compute the ab-complexity of the Tribonacci word $\tribo$:
 
 \begin{theorem}\label{triboabcom} Let $\tribo$ denote the Tribonacci word. Then 
$\abp_{\tribo} (n) \in \{3,4,5,6,7\}$ for every positive integer $n.$
Moreover, each of these five values is attained.
\end{theorem}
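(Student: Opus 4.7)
The plan is to combine Theorem~\ref{2bal} with a short sliding-window argument and a finite verification. Writing $\psi(U) = (|U|_0, |U|_1, |U|_2)$ for the Parikh vector of $U$, the quantity $\abp_\tribo(n)$ counts the distinct Parikh vectors among length-$n$ factors of $\tribo$, all of which lie on the affine plane $x_0 + x_1 + x_2 = n$.

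The upper bound $\abp_\tribo(n) \leq 7$ follows directly from Theorem~\ref{2bal}: for each letter $a$, the coordinate $|U|_a$ takes at most three consecutive integer values $m_a, m_a+1, m_a+2$ as $U$ ranges over $\mathcal{F}_\tribo(n)$, so the Parikh vectors lie in a $3 \times 3 \times 3$ axis-aligned cube intersected with that plane. Translating by $m_a$ in each coordinate reduces the count to the number of $(y_0,y_1,y_2) \in \{0,1,2\}^3$ satisfying $y_0+y_1+y_2 = n - (m_0+m_1+m_2)$; an elementary enumeration gives at most $7$ such triples, with the maximum reached when the sum equals $3$.

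For the lower bound $\abp_\tribo(n) \geq 3$, I would argue by contradiction using the sliding-window identity
\[
\psi(\tribo_{i+1}\cdots\tribo_{i+n}) - \psi(\tribo_i \cdots \tribo_{i+n-1}) = e_{\tribo_{i+n}} - e_{\tribo_i},
\]
where $e_a$ denotes the $a$-th standard basis vector. If $\abp_\tribo(n) = 1$, these differences all vanish and $\tribo$ is $n$-periodic, contradicting the aperiodicity of the fixed point of the primitive morphism $\tau$. If $\abp_\tribo(n) = 2$, letting $v \neq 0$ be the difference of the two Parikh vectors, every $e_{\tribo_{i+n}} - e_{\tribo_i}$ lies in $\{0, v, -v\}$, and since such a difference has at most two nonzero entries of absolute value $1$, one is forced to $v = \pm(e_b - e_a)$ for some specific pair $a \neq b$. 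Letting $c$ denote the third letter, this yields $\tribo_{i+n} = c \iff \tribo_i = c$ for every $i$, so the indicator sequence of $c$ is $n$-periodic and the frequency of $c$ in $\tribo$ is rational --- contradicting the irrationality of the Tribonacci letter frequencies $1/\lambda, 1/\lambda^2, 1/\lambda^3$, where $\lambda$ is the real root of the irreducible polynomial $x^3 - x^2 - x - 1$.

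To show that every value in $\{3, 4, 5, 6, 7\}$ is actually attained, I would carry out a direct computation on a sufficiently long prefix of $\tribo$ generated by iterating $\tau$: the value $3$ appears already at $n = 1$, and for each of $4, 5, 6, 7$ one identifies an explicit length. The main obstacle is the attainment of $7$, since by the box-counting argument it demands an $n$ for which all seven lattice points on the central layer of the $3 \times 3 \times 3$ Parikh cube are realized simultaneously; finding such an $n$ is a finite search, expected to succeed at modest values, and uniform recurrence of $\tribo$ furnishes an explicit bound on the prefix length that must be inspected in order to exhibit every factor of length $n$.
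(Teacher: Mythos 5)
Your argument is correct, but both bounds are obtained by a different route than the paper's. For the upper bound, the paper anchors $\Psi_{\tribo}(n)$ to the three vectors ${\rm Central}(n)$ coming from the unique right special factor $\tribo^<_{n-1}$, and then uses the pairwise $2$-balance condition to confine $\Psi_{\tribo}(n)$ to one of three hexagons (cardinality $7$) or three triangles (cardinality $6$) in a twelve-vertex graph; you instead observe directly that $2$-balance puts all Parikh vectors in a translate of $\{0,1,2\}^3$ and count lattice points on the layer $y_0+y_1+y_2=s$, whose maximum is $7$ at $s=3$. Your count is right and gives the bound $\abp_{\tribo}(n)\le 7$ with less machinery; what the paper's finer analysis buys is the structural information (which seven-element configurations can occur, and the role of ${\rm Central}(n)$ and the set $B(n)$) that is reused later in Proposition~\ref{equivalent}. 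For the lower bound the paper simply notes that $\tribo^<_{n-1}0$, $\tribo^<_{n-1}1$, $\tribo^<_{n-1}2$ are all factors, giving three distinct Parikh vectors at once; your sliding-window argument ruling out $\abp(n)\in\{1,2\}$ via aperiodicity and the irrationality of the letter frequencies $1/\beta,1/\beta^2,1/\beta^3$ is also valid (the relevant facts are standard and quoted in the paper), though less direct. On attainment, both you and the paper ultimately defer to machine computation; your only misstep is the expectation that the search for $\abp(n)=7$ ``succeeds at modest values'' --- the least witness is $n=3914$ --- and of course the existence of witnesses for $4,5,6,7$ is not guaranteed by any a priori argument, so the computation (with the uniform-recurrence bound you mention to certify that all factors of the given length have been seen) genuinely has to be carried out and its output reported, as the paper does in its remark.
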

 
We also obtain several equivalent characterizations of those values $n$ for which
$\abp(n)=3.$ As a consequence we show that $\abp(n)=3$ for infinitely many $n.$ 
Similarly we show that $\abp(n)=7$ for infinitely many $n,$ although the least $n$ for which $\abp(n)=7$ is $n=3914.$ We conclude with some open questions.

\vspace{.2in}
\noindent{\large \bf Acknowledgements:} The second author is partially supported by grant no. 8121419 from the Finnish Academy. The third author is partially supported by grant no. 090038011 from the Icelandic Research Fund.

\section{\label{S:balance}Two Proofs of Theorem~\ref{2bal}}

\subsection{Preliminaries to Proof 1 of Theorem~\ref{2bal} } 

Let $\tau: \{0,1,2\}\rightarrow \{0,1,2\}^*$ denote the morphism
\[0\mapsto 01\,\,\,\,\,\,1\mapsto 02\,\,\,\,\,\,2\mapsto 0,\]
and let \[\tribo = \lim_{n\rightarrow \infty}\tau^{n}(0)= 01020100102010 \cdots\]
denote the Tribonacci word.

For each $u\in \{0, 1, 2\}^*,$ we denote by
$\Psi(u)$ the {\it Parikh vector} associated to $u,$ that is 
\[\Psi(u)=(|u|_0,|u|_1,|u|_2).\]
For each infinite word $\omega \in \{0, 1, 2\}^{\nats}$
let $\Psi_\omega(n)$ denote the set of Parikh vectors of factors of length $n$ of  $\omega$:
\[\Psi_\omega(n)=\{\Psi(u)\,|\, u \in {\mathcal F}_{\omega}(n)\}.\]
Thus we have 
\[\abp_{\omega}(n) = \mbox{Card}(\Psi_\omega(n)).\]

We will need the following two lemmas.

\begin{lemma}\label{L0} Let $U$ be a non-empty factor of $\tribo.$ Then there exists a factor $u$ of $\tribo$ such that  
$\Psi(U)=(|u|+\delta, |u|_0, |u|_1)$ for some $\delta \in \{-1,0,1\}.$
Moreover  if  $|U|\geq 3$
then $|u|<|U|.$ 
\end{lemma}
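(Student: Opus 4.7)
Since each image $\tau(a)$ begins with the letter $0$, the decomposition of $\tribo$ into consecutive $\tau$-blocks is unique, and the block-starts are exactly the positions of $0$ in $\tribo$. My strategy is to exploit this by \emph{desubstituting} $U$. Concretely, I fix an occurrence of $U$ in $\tribo$ and write
\[U = p \cdot \tau(v) \cdot s,\]
where $v$ is the (possibly empty) factor of $\tribo$ whose $\tau$-image fills the maximal run of whole $\tau$-blocks covered by $U$, and $p$, $s$ collect what is left at the two ends. Because $|\tau(a)| \le 2$ for every $a$, only six cases occur: $p \in \{\varepsilon, 1, 2\}$ and $s \in \{\varepsilon, 0\}$.

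The key computational input is the Parikh identity
\[\Psi(\tau(v)) = (|v|,\,|v|_0,\,|v|_1),\]
which follows directly from $|\tau(w)|_0 = |w|$, $|\tau(w)|_1 = |w|_0$ and $|\tau(w)|_2 = |w|_1$. In each of the six cases I will define a candidate $u$: set $u = v$ when $p = \varepsilon$; set $u = 0v$ when $p = 1$ (since the $1$ occurs as the second letter of $\tau(0) = 01$, so $0v$ is the corresponding desubstituted factor of $\tribo$); and set $u = 1v$ when $p = 2$ (since the $2$ occurs as the second letter of $\tau(1) = 02$). Each such $u$ is automatically a factor of $\tribo$, because the desubstitution of $\tribo$ is $\tribo$ itself. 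A direct Parikh computation then gives $\Psi(U) = (|u|+\delta, |u|_0, |u|_1)$ with $\delta = 0$ in the cases $(p,s) \in \{(\varepsilon,\varepsilon), (1,0), (2,0)\}$, $\delta = 1$ in case $(\varepsilon,0)$, and $\delta = -1$ in cases $(1,\varepsilon)$ and $(2,\varepsilon)$.

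For the moreover clause I compare $|U|$ and $|u|$ in each case. Whenever $s = 0$ the length gap is at least $1$ automatically. The three remaining cases share the formula $|U| - |u| = |v|_0 + |v|_1$, which is non-negative and vanishes precisely when $v$ consists only of $2$s. Since every non-zero letter of $\tribo$ is both preceded and followed by $0$ (so in particular $22$ is not a factor), this forces $v \in \{\varepsilon, 2\}$; a quick enumeration of these subcases yields $|U| \le 2$ in each, so $|U| \ge 3$ implies $|u| < |U|$.

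The main obstacle I foresee is the sixfold bookkeeping: one must verify that each desubstituted $u$ genuinely lies in $\tribo$ and that the degenerate subcases in which the length gap collapses correspond only to very short $U$. A useful preliminary observation that disposes of many edge cases at once is the list of forbidden two-letter factors of $\tribo$, namely that none of $11$, $12$, $21$, $22$ occurs; this both justifies the uniqueness of the block decomposition and rules out, for example, the would-be subcase $(p,s)=(2,\varepsilon)$ with $v = 2$, where $u = 12$ is not a legal factor.
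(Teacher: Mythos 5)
Your proof is correct and follows essentially the same route as the paper: both desubstitute $U$ through $\tau$ (your decomposition $U = p\,\tau(v)\,s$ with $p\in\{\varepsilon,1,2\}$, $s\in\{\varepsilon,0\}$ is exactly the paper's ``$U=\tau(u)$, $0^{-1}\tau(u)$, $\tau(u)0$, or $0^{-1}\tau(u)0$'' with $u\in\{v,0v,1v\}$) and both use $\Psi(\tau(w))=(|w|,|w|_0,|w|_1)$. Your treatment of the moreover clause via the explicit gap $|U|-|u|=|v|_0+|v|_1$ is a direct version of the paper's contradiction argument and is, if anything, a bit more transparent.
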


\begin{proof} It is readily seen that every non-empty factor $U$  of $\tribo$ may be written as either $U=\tau(u),$ or $U=0^{-1}\tau(u),$ or $U=\tau(u)0,$ or $U=0^{-1}\tau(u)0$ for some factor $u$ of $\tribo.$  Moreover, 
$|\tau(u)|_0=|u|,$ $|\tau(u)|_1=|u|_0,$ and $|\tau(u)|_2=|u|_1.$
Thus we have that 
\[\Psi(U)=(|u|+\delta, |u|_0, |u|_1)\] for some $\delta \in \{-1,0,1\}.$ Moreover, from above it follows that $|U|\geq |\tau(u)|-1.$
Also, for all $u$ we have $|\tau(u)|\geq |u|,$ and if $|u|\geq 3,$ then $|\tau(u)|\geq |u|+2$ since either $u$  contains at least two occurrences of $0,$ or $u$ contains both an occurrence of  $0$ and an occurrence of $1.$
Finally, to see that $|u|<|U|$ whenever $|U|\geq 3,$ we  suppose to the contrary that  $|u|\geq |U|\geq 3;$ then 
\[|U|\geq |\tau(u)|-1\geq |u|+1 >|u|,\] which is a contradiction.

\end{proof}

\noindent{\bf Note:} In what follows, we will often apply the above lemma to a pair of words $U$ and $V$ where $|V|_0=|U|_0 +2$ and $|U|_i=|V|_i+3$ for some $i\in \{1,2\}.$ Note that under these hypotheses, $|U|\geq 7$ since $U$ contains either at least $3$ occurrences of $1,$ or at least $3$ occurrences of $2.$
Since every $1$ and $2$ occurring in $\tribo$ is always preceded by  $0,$ we deduce that $|U|_0\geq 2,$ and hence $|V|\geq 4.$
\vspace{.2in}

\begin{lemma}\label{L1} Let $U$ and $V$ be factors of $\tribo$ satisfying the following two conditions:
\begin{itemize}
\item[] $|V|_0 = |U|_0 +2$
\item[]$|U|_i= |V|_i +3$
\end{itemize}
for some $i\in \{1,2\}.$ Then there exist factors $u$ and $v$ of $\tribo,$ with $|u|<|U|,$ $|v|<|V|,$ $|u| \leq |v|,$ and $|u|_{i-1}=|v|_{i-1}+3.$
\end{lemma}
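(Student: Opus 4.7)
The plan is to apply Lemma~\ref{L0} separately to $U$ and to $V$, read off the Parikh vectors of the resulting preimages, and then translate the two hypotheses on $U,V$ directly into the desired conclusions about $u,v$.

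First I would invoke Lemma~\ref{L0} twice: it yields factors $u,v$ of $\tribo$ and integers $\delta_U,\delta_V\in\{-1,0,1\}$ such that
\[
\Psi(U)=(|u|+\delta_U,\,|u|_0,\,|u|_1),\qquad \Psi(V)=(|v|+\delta_V,\,|v|_0,\,|v|_1).
\]
Reading the $(i+1)$-th coordinate of these vectors, the hypothesis $|U|_i=|V|_i+3$ becomes exactly $|u|_{i-1}=|v|_{i-1}+3$ for the relevant $i-1\in\{0,1\}$, which is one of the two required conclusions.

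Next, comparing zeroth coordinates, the hypothesis $|V|_0=|U|_0+2$ becomes
\[
|v|+\delta_V=|u|+\delta_U+2,\qquad\text{i.e.,}\qquad |v|-|u|=2+\delta_U-\delta_V.
\]
Since $\delta_U,\delta_V\in\{-1,0,1\}$, we have $\delta_U-\delta_V\geq -2$, hence $|v|\geq|u|$, giving the third required conclusion.

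Finally, for the length bounds: by the Note preceding Lemma~\ref{L1}, the hypotheses force $|U|\geq 7$ and $|V|\geq 4$, both at least $3$. The second assertion of Lemma~\ref{L0} then gives $|u|<|U|$ and $|v|<|V|$. I do not see any real obstacle here; the whole lemma is just a transcription of the incidence matrix of $\tau$ combined with the size lower bounds from the Note, so the only thing to be careful about is checking that the case analysis on $\delta_U,\delta_V\in\{-1,0,1\}$ never violates $|u|\leq|v|$, which is handled by the simple arithmetic identity above.
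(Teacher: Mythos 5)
Your proposal is correct and follows essentially the same route as the paper's proof: apply Lemma~\ref{L0} to each of $U$ and $V$, obtain $|u|-|v|=\delta_V-\delta_U-2\leq 0$ from the zeroth coordinates, and read $|u|_{i-1}=|v|_{i-1}+3$ off the $(i+1)$-th coordinates. Your explicit appeal to the Note to justify $|U|,|V|\geq 3$ (and hence the strict length bounds from Lemma~\ref{L0}) is a point the paper leaves implicit, but the argument is the same.
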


\begin{proof} By the previous lemma there exist factors $u,v$ with $|u|<|U|,$ $|v|<|V|,$ such that  

\[\Psi (U)=(|u|+\delta _1, |u|_0, |u|_1) \,\,\,\,\mbox{and}\,\,\,\,
\Psi (V)=(|v|+\delta _2, |v|_0, |v|_1)\]
for some $\delta _1,\delta _2 \in \{-1,0,1\}.$
By hypothesis we have
\[|v|+\delta_2=|V|_0=|U|_0+2=|u|+\delta _1 +2,\]
whence
\[|u|=|v| +\delta_2-\delta_1 -2 \leq |v|.\]
Finally, the condition $|U|_i= |V|_i +3$ for $i\in \{1,2\}$ implies $|u|_{i-1}=|v|_{i-1}+3.$

\end{proof}

In what follows, we will make use the following terminology: We say two finite words $U$ and $V$ on the alphabet $\{0,1,2\}$ are  {\it pairwise $2$-imbalanced} if 
$|U|=|V|$ and $\bigl|U|_i-|V|_i\bigr|\geq 3$ for some $i\in \{0,1,2\}.$

\subsection{Proof 1 of Theorem~\ref{2bal}} Suppose to the contrary that $\tribo$ is not $2$-balanced. Then there exists a shortest pair of factors $U$ and $V$ with
$|U|=|V|$ and $|U|_i-|V|_i\geq 3$ for some $i\in \{0,1,2\}.$ If $|U|_i-|V|_i>3,$ then by removing the last letter from each of $U$ and $V,$ we would obtain a shorter pair of words of equal length which are pairwise $2$-imbalanced.
Thus the minimality condition on $|U|$ implies that $|U|_i-|V|_i=3.$ Also, it is easily checked that all factors of length less or equal to $3$ are $2$-balanced whence $|U|\geq 4.$ We consider three cases: $i=0,$ then $i=2,$ and finally $i=1.$

\noindent {\bf Case 1: $|U|_0-|V|_0=3.$}  By Lemma~\ref{L0}
 there exist factors $u,v$ with $|u|<|U|,$ $|v|<|V|,$ such that  

\[\Psi (U)=(|u|+\delta _1, |u|_0, |u|_1) \,\,\,\,\mbox{and}\,\,\,\,
\Psi (V)=(|v|+\delta _2, |v|_0, |v|_1)\]
for some $\delta _1,\delta _2 \in \{-1,0,1\}.$ The condition $|U|_0=|V|_0+3$ implies that 
\[|u|+\delta _1=|v|+ \delta _2 +3\]
that is
\[|u|-|v|=(3+\delta_2 -\delta_1)>0.\]

\noindent The condition $|U|=|V|$ implies that
\[|u|+\delta_1 +|u|_0 +|u|_1=|v|+\delta _2 +|v|_0+|v|_1\]
or equivalently
\[2|u|+\delta_1-|u|_2=2|v|+\delta_2-|v|_2.\]
Thus
\[|u|_2-|v|_2=2(|u|-|v|) +\delta_1-\delta_2=2(3+\delta_2 -\delta_1)+\delta_1-\delta_2=6+\delta_2-\delta_1.\]
Let $u'$ be the prefix of $u$ of length $|u|-(3+\delta_2 -\delta_1)=|v|.$
Then, $|u'|_2 \geq |v|_2+ 3$ contradicting the minimality of $|U|.$

\vspace{.2in}

\noindent {\bf Case 2: $|U|_2-|V|_2=3.$} By Lemma~\ref{L0}
 there exist factors $u,v$ with $|u|<|U|,$ $|v|<|V|,$ such that  

\[\Psi (U)=(|u|+\delta _1, |u|_0, |u|_1) \,\,\,\,\mbox{and}\,\,\,\,
\Psi (V)=(|v|+\delta _2, |v|_0, |v|_1)\]
for some $\delta _1,\delta _2 \in \{-1,0,1\}.$ The condition $|U|_2=|V|_2+3$ implies that $|u|_1=|v|_1+3.$ If $|u|\leq |v|,$ then $u$ and the prefix of $v$ of length $|u|$ are pairwise $2$-imbalanced and of length less than $|U|,$
contradicting the minimality of $|U|.$ Thus we can suppose that
\[|u|=|v|+m\,\,\,\mbox{for some} \,\,\,m\geq 1.\]

\noindent If $m\geq 2,$ then we deduce that
\[|U|_0=|u|+\delta_1\geq |u|-1 \geq |u|-(m-1)=|v|+1\geq |v|+\delta _2=|V|_0.\] 
As $|U|_2=|V|_2+3,$  $|U|_0\geq |V|_0,$ and  $|U|=|V|,$ it follows that
$|V|_1\geq |U|_1+3,$ that is $|v|_0\geq |u|_0+3.$ But then $v$ and the prefix of $u$ of length $|v|$ are pairwise $2$-imbalanced and of length less than $|U|,$ contradicting the minimality of $|U|.$ 

Thus we can suppose $m=1,$ that is $|u|=|v|+1.$  Again, if $|U|_0\geq |V|_0,$
as above we would deduce that $|v|_0\geq |u|_0+3$ which would give rise to a contradiction. So we must have that $|U|_0<|V|_0.$ This gives
\[ |v|+1 +\delta_1=|u|+\delta_1=|U|_0<|V|_0=|v|+\delta_2\]
that is
\[1+\delta_1<\delta_2\] 
which in turn implies that
\[\delta _1=-1 \,\,\,\mbox{and}\,\,\,\delta_2=1.\]
Thus
\[\Psi (U)=(|u|-1, |u|_0, |u|_1) \,\,\,\,\mbox{and}\,\,\,\,
\Psi (V)=(|v|+1, |v|_0, |v|_1).\]

\noindent Finally, the conditions $|U|=|V|$ together with $|u|=|v|+1,$ and $|u|_1=|v|_1+3$ imply that
\begin{align*}
|v|_0&= |v|_0 +(|v|+1 +|v|_1) - (|v|+1+|v|_1)\\
&= |V| -(|v|+1 +|v|_1)\\
&= |V|-(|u| + |u|_1-3)\\
&= |V|-(|u| + |u|_1  -1 -2)\\
&= |V|- (|U|-|u|_0 -2)\\
&= |u|_0+2.
\end{align*}

In summary we have: $|u|_1=|v|_1+3$ and $|v|_0=|u|_0+2.$ Thus we can apply Lemma~\ref{L1} which gives a contradiction to the minimality of $|U|.$

\vspace{.2in}

\noindent {\bf Case 3: $|U|_1-|V|_1=3.$} By Lemma~\ref{L0}
 there exist factors $u,v$ with $|u|<|U|,$ $|v|<|V|,$ such that  

\[\Psi (U)=(|u|+\delta _1, |u|_0, |u|_1) \,\,\,\,\mbox{and}\,\,\,\,
\Psi (V)=(|v|+\delta _2, |v|_0, |v|_1)\]
for some $\delta _1,\delta _2 \in \{-1,0,1\}.$ The condition $|U|_1=|V|_1+3$ implies that $|u|_0=|v|_0+3.$

If $|u|\leq |v|,$ then $u$ and the prefix of $v$ of length $|u|$ are pairwise $2$-imbalanced and of length less than $|U|,$
contradicting the minimality of $|U|.$ Thus we can suppose that
\[|u|=|v|+m\,\,\,\mbox{for some} \,\,\,m\geq 1.\]

%\framed{\textbf{G: To avoid repetiveness in the proof, i suggest to remove this following framed part, and to replace it with last bold sentence}

\noindent Next we proceed like in Case~2. If $m\geq 2,$ then we deduce that
\[|U|_0=|u|+\delta_1\geq |u|-1 \geq |u|-(m-1)=|v|+1\geq |v|+\delta _2=|V|_0.\] 
As $|U|_1=|V|_1+3,$  $|U|_0\geq |V|_0,$ and  $|U|=|V|,$ it follows that
$|V|_2\geq |U|_2+3,$ that is $|v|_1\geq |u|_1+3.$ But then $v$ and the prefix of $u$ of length $|v|$ are pairwise $2$-imbalanced and of length less than $|U|,$ contradicting the minimality of $|U|.$ 

Thus we can suppose $m=1,$ that is $|u|=|v|+1.$  Again, if $|U|_0\geq |V|_0,$
as above we would deduce that $|v|_1\geq |u|_1+3$ which would give rise to a contradiction. So we must have that $|U|_0<|V|_0.$ This gives
\[ |v|+1 +\delta_1=|u|+\delta_1=|U|_0<|V|_0=|v|+\delta_2\]
that is
\[1+\delta_1<\delta_2\] 
which in turn implies that
\[\delta _1=-1 \,\,\,\mbox{and}\,\,\,\delta_2=1.\]
Thus
\[\Psi (U)=(|u|-1, |u|_0, |u|_1) \,\,\,\,\mbox{and}\,\,\,\,
\Psi (V)=(|v|+1, |v|_0, |v|_1).\]

\noindent The conditions $|U|=|V|$ together with $|u|=|v|+1,$ and $|u|_0=|v|_0+3$ imply that
\begin{align*}
|v|_1&= |v|_1 +(|v|+1 +|v|_0) - (|v|+1+|v|_0)\\
&= |V| -(|v|+1 +|v|_0)\\
&= |V|-(|u| + |u|_0-3)\\
&= |V|-(|u| + |u|_0  -1 -2)\\
&= |V|- (|U|-|u|_1 -2)\\
&= |u|_1+2.
\end{align*}

\noindent In summary we have

%\textbf{With a mere adaptation of the proof of Case~2, one can prove that:}
%}

\[|u|=|v|+1\,\,\,\mbox{and}\,\,\,|v|_1=|u|_1+2\,\,\,\mbox{and}\,\,\,|u|_0=|v|_0+3.\]

\noindent 
Since $|u|, |v| \geq 3$,  Lemma~\ref{L0} implies that
 there exist factors $u',v'$ with $|u'|<|u|,$ $|v'|<|v|,$ such that  

\[\Psi (u)=(|u'|+\delta '_1, |u'|_0, |u'|_1) \,\,\,\,\mbox{and}\,\,\,\,
\Psi (v)=(|v'|+\delta '_2, |v'|_0, |v'|_1)\]
for some $\delta' _1,\delta' _2 \in \{-1,0,1\}.$

As $|v|_1=|u|_1+2$ we deduce that $|v'|_0=|u'|_0+2.$ Similarly, the condition
$|u|_0=|v|_0+3$ implies that
\[ |u'|+\delta_1' =|v'|+\delta_2' +3\]
so that
\[|v'|=|u'|+\delta_1'-\delta_2'-3\leq |u'|-1.\]

Now if $|v'|\leq |u'|-2,$ then if $v''$ is any factor of the Tribonacci word of length $|u'|$
beginning in $v',$ we would have $|v''|_0\geq |v'|_0+1=|u'|_0+3$ as every factor of length
$2$ or greater contains at least one  occurrence of the letter $0.$ 

\noindent So we can assume that $|v'|=|u'|-1.$
Then
\[|v'|+1 +\delta_1'=|u'|+\delta_1'=|u|_0=|v|_0+3=|v'| +\delta_2' +3\]
that is
\[\delta_1'-\delta_2'=2,\]
which in turn implies that
\[\delta' _1=1 \,\,\,\mbox{and}\,\,\,\delta'_2=-1.\]

\noindent So
\begin{align*}
|u'|_1&=|u'|_1 +(|v'|+ |v'|_0)- (|v'|+ |v'|_0)\\
&= |u'|_1 +(|v'|+1 +1 + |v'|_0-2)- (|v'|-1 +|v'|_0 +1)\\
&= |u'|_1 +(|u'| +1 +|u'|_0) - (|v'|-1 +|v'|_0+1)\\
&= |u'|_1 +(|u|-|u'|) - ( |v|-|v'|_1 +1)\\
&= |u|-( |v|-|v'|_1 +1)\\
&= |u|-(|v|+1) +|v'|_1\\
&= |v'|_1
\end{align*}

\noindent Finally, 
\begin{align*}
|u'|_2&=|u'|_2 +(|u'|_0+|u'|_1-1) - (|u'|_0+|u'|_1-1)\\
&= |u'|-1 -(|u'|_0+|u'|_1-1)\\
&= |v'| -(|v'|_0-2 + |v'|_1 -1)\\
&= |v'|_2 +3.
\end{align*}

\noindent So in summary we have
\[|v'|_0=|u'|_0+2\,\,\,\mbox{and}\,\,\,|u'|_2=|v'|_2+3\]
to which we can apply Lemma~\ref{L1} to obtain the desired contradiction.

\subsection{Preliminaries to Proof 2 of Theorem~\ref{2bal}}

For the results in this section, see \cite[chapter 10]{Lothaire2005}.

We write  $\tribo = u_0 u_1 u_2 \cdots$ with $u_i \in \{0,1,2\}.$ 
The {\it Tribonacci numbers} $T_k$ are defined by $T_k  = |\tau^k(0)|.$ Hence
$T_k = T_{k-1} + T_{k-2} + T_{k-3}$ for $k\geq 3,$ and
\[
T_0 = 1, \quad T_1 = 2, \quad T_2 = 4, \quad T_3 = 7,  \quad T_4 = 13, \ldots
\]

\noindent Let \[
M = \left( \begin{matrix} 1 & 1 & 1 \\ 1 & 0 & 0 \\ 0 & 1 & 0 \end{matrix} \right)
\]
denote the incidence matrix of the morphism $\tau,$ i.e., $M_{ij} =|\tau(j)|_i$
for all $i,j\in \{0,1,2\}.$
The eigenvalues of $M$ are the roots of the polynomial $x^3 - x^2 - x - 1.$ We denote them $\beta, \alpha$, and $\balpha$ so that $\beta$ is real and 
 $\alpha$ and $\balpha$ are complex conjugates. (Given a complex number $z=a+bi,$ we denote by $\bar{z}=a-bi$ the complex conjugate.)
 We have
\[
\beta = 1.83928\ldots \qquad {\rm and }  \qquad |\alpha| =  |\balpha| =  0.73735\ldots 
\]

More information concerning the roots $\alpha$ and $\beta$ may be found in the comments of sequences A058265 and A000073 in ``The On-Line Encyclopedia of Integer Sequences''. In particular, some formulas relating $\alpha$ and $\balpha$ to $\beta$ are recalled.

\noindent It is well-known that the frequencies of letters in $\tribo$ exist and 
\[
{\rm Freq}_{\tribo}(0) = \frac{1}{\beta}, \quad {\rm Freq}_{\tribo}(1) = \frac{1}{\beta^2}, \quad {\rm Freq}_{\tribo}(2)  = \frac{1}{\beta^3}.
\]

\noindent Set
\[
v_{\beta} = \left(\begin{matrix}  {\beta^{-1}} \\  {\beta^{-2}} \\ {\beta^{-3} }   \end{matrix} \right), \quad 
v_{\alpha} =  \left( \begin{matrix}{\alpha^{-1}} \\ { \alpha^{-2}} \\ {\alpha^{-3}}  \end{matrix}  \right), \quad
v_{\balpha} =  \left( \begin{matrix}{\balpha^{-1}} \\ { \balpha^{-2}} \\ {\balpha^{-3}}  \end{matrix}  \right).
\]
Then $v_{\beta}, v_{\alpha}, v_{\balpha}$ are eigenvectors of $M$  corresponding to $\beta, \alpha, \balpha$, respectively, normalized so that in each case the sum of coordinates equals~$1.$
Let us  denote by $a_{\beta}, a_{\alpha}, a_{\balpha}$ the complex numbers for which 
\[ 
 a_{\beta} v_{\beta} +  a_{\alpha} v_{\alpha}  +  a_{\balpha} v_{\balpha} = \left(\begin{matrix}    1\\ 0\\ 0  \end{matrix} \right).
 \]
\noindent The numbers $a_\beta, a_\alpha$ and $a_{\bar{\alpha}}$ are unique, and therefore    
\[
a_{\balpha} = \bar{a_{\alpha}} \qquad {\rm and } \quad |a_{\alpha}| = | a_{\balpha}| = 0.14135\ldots
\]

Denote by 
\[
e_1 = \left(\begin{matrix}1\\  0\\  0 \end{matrix} \right), \quad e_2 = \left(\begin{matrix} 0\\ 1\\  0 \end{matrix} \right), \quad e_3 = \left(\begin{matrix}   0 \\ 0 \\ 1 \end{matrix} \right).
\]
Then we can represent letter frequencies as  
${\rm Freq_{\tribo}(i)} =  <\kern-3pt v_{\beta} , e_i \kern-3pt>$ for each letter $i=0,1,2$, where $<\kern-3pt\cdot , \cdot \kern-3pt>$ denotes the Hermitian scalar product. 

\subsection{Proof 2 of Theorem~\ref{2bal}}

Every natural number $N$ has a unique  {\it Zeckendorff} Tribonacci representation
\[
N = \sum_{k \geq 0} p_k T_k,
\]
where $p_k \in \{0,1\}$, all but finitely many $p_k$ equal $0,$ and  if $p_{k} = p_{k-1} = 1$, then  $p_{k-2} = 0$.

\noindent It can be shown (see \cite[Proposition 10.7.4 on page 510]{Lothaire2005}) that for each letter $i\in \{0,1,2\}$,
\[
|u_0 \cdots u_{N-1}|_i =  a_{\beta}  <\kern-3pt v_{\beta},e_i \kern-3pt > \sum_{k\geq 0} p_k \beta^k  
+ a_{\alpha} <\kern-3pt v_{\alpha},e_i \kern-3pt >  \sum_{k\geq 0} p_k \alpha^k   
+ a_{\balpha}   <\kern-3pt v_{\balpha},e_i \kern-3pt >  \sum_{k\geq 0} p_k \balpha^k  ,
\]
and 
\[
N =    a_{\beta}   \sum_{k\geq 0} p_k \beta^k  
+ a_{\alpha}   \sum_{k\geq 0} p_k \alpha^k   
+ a_{\balpha}   \sum_{k\geq 0} p_k \balpha^k.  
\]
These two identities  imply
\begin{align} \label{eq1}
|u_0 \cdots u_{N-1}|_i - N <\kern-3pt v_{\beta} , e_i\kern-3pt> &= a_{\alpha} <\kern-3pt v_{\alpha} - v_{\beta},e_i \kern-3pt >  \sum_{k\geq 0} p_k \alpha^k   
+ a_{\balpha}   <\kern-3pt v_{\balpha} -  v_{\beta},e_i \kern-3pt >  \sum_{k\geq 0} p_k \balpha^k   \cr
&= 2\, {\rm Re}\bigl(  a_{\alpha} <\kern-3pt v_{\alpha} - v_{\beta},e_i \kern-3pt >   \sum_{k\geq 0} p_k \alpha^k   \bigl), \cr
&=  \sum_{k\geq 0}  2\, p_k \,  {\rm Re}\Bigl(  a_{\alpha} \bigl( \alpha^{-(i+1)} -  \beta^{-(i+1)}   \bigr)  \alpha^k   \Bigl), 
\end{align}
where ${\rm Re}(\cdot)$ denotes the real part of a complex number. The second equality above holds because complex conjugation commutes with multiplication and addition.

The following lemma shows that 
in order to prove Theorem~\ref{2bal}, it suffices to determine adequate upper and lower bounds for the above equation~\eqref{eq1}:

\begin{lemma} \label{lemma1}
 Let $x= x_0 x_1 x_2 \cdots$ be an infinite word on a finite alphabet $A,$ and $b\in A$ a  letter occurring in $x.$ If there exist real numbers $A,B$ and $\gamma$ for which  
\[
A < |x_0 x_1 \cdots x_{N-1} |_b - N\gamma < B 
\]
for all integers $N\geq 1$, then the word $x$  is $\lfloor 2(B-A) \rfloor$-balanced with respect to the 
letter~$b,$ i.e.,
\[\bigl||U|_b - |V|_b \bigr|\leq \lfloor 2(B-A)\rfloor\] 
for all factors $U,V$ of equal length.
\end{lemma}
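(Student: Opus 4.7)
The plan is to reduce the balance question to a bound on the fluctuation of the prefix-count of $b$ from its linear trend. I would set $S_N := |x_0 x_1 \cdots x_{N-1}|_b$ for $N\geq 1$, $S_0 := 0$, and introduce the fluctuation $f(N) := S_N - N\gamma$, so that the hypothesis becomes $A < f(N) < B$ for every $N \geq 1$. Any factor of $x$ of length $n \geq 1$ has the form $U = x_i x_{i+1} \cdots x_{i+n-1}$, for which telescoping immediately gives $|U|_b = S_{i+n} - S_i$.

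The main step is then to consider two factors $U, V$ of the same length $n$, starting at positions $i$ and $j$ with $i,j \geq 1$. Substituting $S_N = f(N) + N\gamma$ into the expression $|U|_b - |V|_b = (S_{i+n}-S_i) - (S_{j+n}-S_j)$ makes the linear terms $N\gamma$ cancel, leaving
$$|U|_b - |V|_b = \bigl(f(i+n) - f(j+n)\bigr) + \bigl(f(j) - f(i)\bigr).$$
Since all four values $f(i+n), f(j+n), f(i), f(j)$ lie in the open interval $(A, B)$ of length $B-A$, each of the two parenthesised differences has absolute value strictly less than $B-A$; consequently $|U|_b - |V|_b$ has absolute value strictly less than $2(B-A)$. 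Being an integer, it must then satisfy $\bigl||U|_b - |V|_b\bigr| \leq \lfloor 2(B-A) \rfloor$, which is the claim. (The strictness of the inequalities even yields the sharper bound $2(B-A)-1$ in the degenerate case when $2(B-A)$ is itself an integer.)

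The only subtlety I anticipate is the boundary case where one of the factors begins at position $i = 0$: the hypothesis bounds $f(N)$ only for $N \geq 1$, while $f(0) = 0$. In the intended application, however, $\gamma$ is the frequency of $b$ in $x$, so the sequence $f$ oscillates about $0$ and hence $A < 0 < B$; then $f(0) \in (A, B)$ as well and the preceding argument applies uniformly across all starting positions. Even without this implicit assumption, the $i=0$, $j\geq 1$ case reduces to estimating $f(n) + f(j) - f(j+n)$, which lies in $(2A-B, 2B-A)$, and this interval fits inside $(-2(B-A), 2(B-A))$ exactly when $A < 0 < B$. So the only place I expect to need any care is in verifying that $0$ lies in the open interval $(A, B)$, which is automatic in the setting of Theorem~\ref{2bal}.
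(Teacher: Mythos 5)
Your proof is correct and is essentially the paper's own argument: both express a factor's $b$-count as a difference of two prefix fluctuations $f(N)=|x_0\cdots x_{N-1}|_b-N\gamma$, bound each fluctuation in $(A,B)$, and conclude $\bigl||U|_b-|V|_b\bigr|<2(B-A)$. Your observation about the starting position $i=0$ (where $f(0)=0$ is not covered by the hypothesis, so one really needs $A<0<B$) is a genuine, if minor, point that the paper's proof silently glosses over and that is indeed harmless in the application to Propositions~\ref{prop1}--\ref{prop3}.
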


\begin{proof}
Let $i,j \geq 0$ be integers. Since 
\[
|x_i x_{i+1} \cdots  x_{i+N-1}|_b - N\gamma =  \Bigl(| x_0 x_{1} \cdots x_{i+N-1}|_b - (N+i) \gamma \Bigr) - \Bigl( |x_0 x_{1}  \cdots x_{i-1}|_b  -  i \gamma\Bigr),
\]
we see that 
\begin{equation}\label{eqbullet}
 A - B  <  |x_i x_{i+1} \cdots  x_{i+N-1}|_b - N\gamma < B - A.
\end{equation}
Consequently,
\[
 \Bigl|   |x_i x_{i+1} \cdots  x_{i+N-1}|_b -  |x_j x_{j+1} \cdots  x_{j+N-1}|_b \Bigr|  < (B-A) - (A-B) =2(B - A).
\]
\end{proof}

We now determine good upper and lower bounds for Eq.~\eqref{eq1}.
The numerical calculations in Propositions~\ref{prop1}, \ref{prop2} and \ref{prop3}  were verified independently by the first and second authors via different computer programs.

\begin{proposition} \label{prop1} We have
\[
-0.6 < |u_0 u_1 \cdots u_{N-1} |_0 - \frac{N}{\beta } < 0.9.
\]
\end{proposition}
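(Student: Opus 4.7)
The plan is to specialize the identity \eqref{eq1} to $i=0$. Since $\langle v_\beta, e_0\rangle = \beta^{-1}$, we obtain
\[
|u_0 u_1 \cdots u_{N-1}|_0 - \frac{N}{\beta} \;=\; \sum_{k\geq 0} p_k\, c_k, \qquad
c_k \,:=\, 2\,\mathrm{Re}\!\bigl( a_{\alpha}(\alpha^{-1}-\beta^{-1})\,\alpha^k \bigr),
\]
where $(p_k)$ is the Zeckendorff digit sequence of $N$. Because $|\alpha| = 0.73735\ldots < 1$, the numbers $c_k$ decay geometrically in absolute value, and I would split the sum into a finite head, depending on the first $K$ Zeckendorff digits, and a uniformly controlled tail.

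The first step is the tail estimate. Setting $M := 2|a_\alpha|\,|\alpha^{-1}-\beta^{-1}|$, one has $|c_k| \le M\,|\alpha|^k$, so for any cutoff $K$,
\[
\Bigl|\sum_{k\geq K} p_k c_k\Bigr| \;\le\; \frac{M\,|\alpha|^K}{1-|\alpha|} \;=:\; T_K,
\]
and $T_K$ can be made as small as desired by choosing $K$ large (the numerical values $|a_\alpha|=0.14135\ldots$, $|\alpha|=0.73735\ldots$, and $\beta=1.83928\ldots$ from the preliminaries make this an explicit computation). The second step is a finite case analysis: enumerate all admissible vectors $(p_0,\ldots,p_{K-1})\in\{0,1\}^K$ respecting the Zeckendorff condition that $p_k=p_{k-1}=1$ forces $p_{k-2}=0$, and compute
\[
S_K^{+} := \max\sum_{k=0}^{K-1} p_k c_k, \qquad S_K^{-} := \min\sum_{k=0}^{K-1} p_k c_k.
\]
Combining the two steps yields $S_K^{-} - T_K < |u_0\cdots u_{N-1}|_0 - N/\beta < S_K^{+} + T_K$.

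The main obstacle is purely numerical: one must pick $K$ large enough and evaluate $c_0,\ldots,c_{K-1}$ accurately enough that $S_K^{+} + T_K < 0.9$ and $S_K^{-} - T_K > -0.6$. Using the naive bound $\sum |c_k|$ ignoring the Zeckendorff constraint gives a window roughly twice the required size, so the admissibility condition really must be exploited when selecting the extremal patterns — the extremal sequence for the upper bound will not simply be the one that puts $p_k=1$ whenever $c_k>0$, because such a choice may violate the forbidden pattern $111$. This is the delicate step, and it is precisely the kind of finite but non-trivial enumeration that the paper flags as requiring independent numerical verification. Analogous specializations of \eqref{eq1} with $i=1$ and $i=2$ will then yield Propositions~\ref{prop2} and \ref{prop3}, from which Lemma~\ref{lemma1} delivers the $2$-balance property.
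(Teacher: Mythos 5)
Your proposal is correct and follows essentially the same route as the paper: the paper specializes \eqref{eq1} to $i=0$, splits at the cutoff $K=8$ into a head $S_L$ and tail $S_R$, bounds the tail by the geometric series $2|a_\alpha|\,|\alpha^{-1}-\beta^{-1}|\,|\alpha|^8/(1-|\alpha|)<0.17$, and bounds the head numerically by $-0.42<S_L<0.73$. The one point where you overcomplicate matters is the claim that the Zeckendorff admissibility condition ``really must be exploited'': the paper simply takes the unconstrained extremal choice $p_k=1$ iff the $k$th term is positive (resp.\ negative), which already gives $0.73+0.17\le 0.9$ and $-0.42-0.17\ge -0.6$ with strict inequalities, so your constrained enumeration is valid but unnecessary.
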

\begin{proof}  By \eqref{eq1}, we have 
\[
|u_0 \cdots u_{N-1}|_0 - \frac{N}{\beta} =  \sum_{k\geq 0} 2 \, p_k \, {\rm Re}\Bigl(  a_{\alpha}(\frac{1}{\alpha} - \frac{1}{\beta})    \alpha^k   \Bigr).
\]
Set 
\[
S_L =   \sum_{0 \leq k \leq 7} 2 p_k \,   {\rm Re}\Bigl(a_{\alpha}(\frac{1}{\alpha} - \frac{1}{\beta})     \alpha^k   \Bigr), \qquad
S_R =   \sum_{k \geq 8}  2 \, p_k \, {\rm Re}\Bigl(a_{\alpha}(\frac{1}{\alpha} - \frac{1}{\beta})   \alpha^k   \Bigr).
\]
Then we have
\begin{equation}\label{eq2}
S_L - |S_R| \leq   S_L + S_R =  |u_0 \cdots u_{N-1}|_0 - \frac{N}{\beta}  \leq S_L + |S_R|. 
\end{equation}
Choosing the $p_k$ so that $p_k =1$  if and only if ${\rm Re}\bigl(a_{\alpha}(\frac{1}{\alpha} - \frac{1}{\beta})     \alpha^k   \bigl)$  is positive, and similarly for negative values,
it can be seen that 
\begin{equation}\label{eq3}
-0.42 <   S_L < 0.73. 
\end{equation}
To estimate the magnitude of $|S_R|$, we compute
\begin{equation} \label{eqa}
\left| \frac{1}{\alpha} - \frac{1}{\beta} \right| = 1.72457\ldots
\end{equation}
Thus
\begin{equation} \label{eq4}
|S_R| \leq 2 |a_{\alpha} | \cdot \left|  \frac{1}{\alpha}  - \frac{1}{\beta}   \right| \sum_{k\geq 8} |\alpha|^k = % 2 \cdot 0.1414 \cdot 1.7245 \cdot 0.7374^{8}  \cdot \frac{1}{1 - 0.7374 } 
2 |a_{\alpha} | \cdot \left|  \frac{1}{\alpha}  - \frac{1}{\beta}   \right| \frac{ |\alpha|^8}{1 - |\alpha|}
< 0.17. 
\end{equation}
The claim follows from inequalities \eqref{eq2}, \eqref{eq3}, and \eqref{eq4}.
\end{proof}

\begin{proposition}\label{prop2} We have 
\[
-0.775 < |u_0 u_1 \cdots u_{N-1} |_1 - \frac{N}{\beta^2 } <  0.725.
\]
\end{proposition}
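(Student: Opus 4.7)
The plan is to mirror the proof of Proposition~\ref{prop1}, specializing equation~\eqref{eq1} to the case $i=1$. Since $<\!v_\beta,e_1\!>=\beta^{-2}$, that formula becomes
\[
|u_0 \cdots u_{N-1}|_1 - \frac{N}{\beta^2} \;=\; \sum_{k\geq 0} 2\, p_k \, {\rm Re}\Bigl( a_{\alpha}\bigl(\tfrac{1}{\alpha^2} - \tfrac{1}{\beta^2}\bigr) \alpha^k \Bigr),
\]
and by Lemma~\ref{lemma1} it suffices to produce upper and lower bounds on this quantity that are valid for every Zeckendorff-admissible digit sequence $(p_k)_{k\ge 0}$.

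I would split the sum into a head $S_L$ consisting of the terms with $0\le k\le K$ (taking $K=7$ as a first try, as in Proposition~\ref{prop1}, and enlarging $K$ if needed) and a tail $S_R$ for $k\ge K+1$. The head $S_L$ is a finite sum and depends only on finitely many digits $p_0,\ldots,p_K$ subject to the Zeckendorff constraint (no pattern $111$). By choosing $p_k=1$ precisely when ${\rm Re}\bigl(a_\alpha(\alpha^{-2}-\beta^{-2})\alpha^k\bigr)$ is positive (and symmetrically for the lower bound), subject to the admissibility constraint, one obtains sharp numerical bounds on $S_L$; this is exactly the step the paper states was checked by computer. For the tail, the triangle inequality and the geometric series give
\[
|S_R| \;\le\; 2\,|a_\alpha|\cdot\Bigl|\tfrac{1}{\alpha^2}-\tfrac{1}{\beta^2}\Bigr|\cdot\frac{|\alpha|^{K+1}}{1-|\alpha|},
\]
which tends to $0$ as $K\to\infty$ since $|\alpha|=0.73735\ldots<1$. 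One first computes the constant $\bigl|\alpha^{-2}-\beta^{-2}\bigr|$ numerically, then picks $K$ large enough that this tail bound, added to the head estimates, still fits inside the target interval $(-0.775,0.725)$.

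The main obstacle is purely computational: one must verify that the \emph{worst-case} Zeckendorff-admissible pattern for the head, combined with the tail error, does not push the total outside $(-0.775,0.725)$. Unlike the looser interval $(-0.6,0.9)$ in Proposition~\ref{prop1}, the bound here is tighter (its width is still $1.5$, but the asymmetry differs), so a slightly larger value of $K$ may be required to shrink $|S_R|$ enough. Apart from this bookkeeping, the structure of the argument is identical to that of Proposition~\ref{prop1}, with $\alpha^{-1}-\beta^{-1}$ replaced throughout by $\alpha^{-2}-\beta^{-2}$.
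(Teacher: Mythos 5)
Your proposal is correct and follows essentially the same route as the paper: split the sum from equation~\eqref{eq1} (with $i=1$) into a finite head bounded by a computer check over admissible digit patterns and a geometric tail bounded via $|a_\alpha|\cdot|\alpha^{-2}-\beta^{-2}|\cdot|\alpha|^{K+1}/(1-|\alpha|)$. The paper takes the head up to $k=10$ (tail from $k\ge 11$), which matches your anticipation that a cutoff larger than $7$ would be needed; the resulting bounds are $-0.70<S_L<0.65$ and $|S_R|<0.075$.
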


\begin{proof}
Analogously to the proof  of Prop.~\ref{prop1}, set 
\[
S_L =  \sum_{0 \leq k \leq 10} 2 \,p_k\, {\rm Re}\Bigl(a_{\alpha}(\frac{1}{\alpha^2} - \frac{1}{\beta^2})      \alpha^k   \Bigr), \qquad
S_R =  \sum_{k \geq 11} 2 \, p_k {\rm Re}\Bigl(a_{\alpha}(\frac{1}{\alpha^2} - \frac{1}{\beta^2})        \alpha^k   \Bigr).
\]
Again, by choosing the $p_k$ appropriately,  it can be verified that 
\[
-0.70  <   S_L < 0.65.  
\]
To find an upper  bound for $|S_R|$, we compute
\begin{equation}\label{eqb}
\left|  \frac{1}{\alpha^2} - \frac{1}{\beta^2}  \right| = 1.96298\ldots
\end{equation}
Therefore,
\[
|S_R| \leq 2 |a_{\alpha} | \cdot \left|  \frac{1}{\alpha^2}  - \frac{1}{\beta^2}   \right| \sum_{k\geq 11} |\alpha|^k 
= 2 |a_{\alpha} | \cdot \left|  \frac{1}{\alpha^2}  - \frac{1}{\beta^2}   \right| \frac{|\alpha|^{11}}{1 - |\alpha| } 
< 0.075.
\]
These inequalities imply the claim.
\end{proof}

\begin{proposition}\label{prop3} We have 
\[
-0.88 < |u_0 u_1 \cdots u_{N-1} |_2 - \frac{N}{\beta^3 } <  0.62.
\]
\end{proposition}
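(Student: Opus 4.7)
The plan is to proceed in direct parallel with Propositions~\ref{prop1} and~\ref{prop2}. Instantiating Equation~\eqref{eq1} at $i = 2$, I would first write
\[
|u_0 \cdots u_{N-1}|_2 - \frac{N}{\beta^3} = \sum_{k \geq 0} 2\, p_k\, {\rm Re}\Bigl( a_\alpha \bigl( \frac{1}{\alpha^3} - \frac{1}{\beta^3} \bigr) \alpha^k \Bigr),
\]
and split the right-hand side at some cutoff $K$ into a finite head $S_L$ consisting of the terms with $0 \leq k \leq K$ and an infinite tail $S_R$ consisting of the terms with $k > K$, where $K$ is to be chosen at the end.

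For the tail, the same geometric-series estimate as before yields
\[
|S_R| \leq 2 |a_\alpha| \cdot \left| \frac{1}{\alpha^3} - \frac{1}{\beta^3} \right| \cdot \frac{|\alpha|^{K+1}}{1 - |\alpha|}.
\]
The crucial new numerical input is the modulus $|\alpha^{-3} - \beta^{-3}|$, which will play the role of \eqref{eqa} and \eqref{eqb} here. Since raising the negative exponent from $2$ to $3$ enlarges this modulus, I expect to need a larger cutoff than $K = 10$; a value around $K = 13$ should drive $|S_R|$ comfortably below, say, $0.1$.

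For the head, I would bound $S_L$ above and below by making extremal choices of the bits $p_k \in \{0,1\}$: for the upper bound take $p_k = 1$ precisely when ${\rm Re}\bigl( a_\alpha (\alpha^{-3} - \beta^{-3}) \alpha^k \bigr) > 0$, and reverse this choice for the lower bound. This is a finite computation, to be cross-checked by two independent programs exactly as flagged just before Proposition~\ref{prop1}.

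Combining the head and tail estimates should then yield $-0.88 < S_L + S_R < 0.62$. The main obstacle I anticipate is purely numerical: the window width $B - A = 1.5$ is just tight enough for Lemma~\ref{lemma1} to deliver $2$-balancedness with respect to the letter $2$, so there is essentially no slack. Consequently $K$ must be chosen large enough that the tail is strictly dominated by the gap remaining after $S_L$ has been pinned down. Apart from this calibration, the argument is a verbatim adaptation of the preceding two proofs.
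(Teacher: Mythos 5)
Your proposal matches the paper's proof essentially verbatim: the paper splits at exactly the cutoff you anticipate ($S_L$ over $0\leq k\leq 13$, $S_R$ over $k\geq 14$), bounds the head by extremal choices of the $p_k$ to get $-0.8371 < S_L < 0.5764$, computes $\bigl|\alpha^{-3}-\beta^{-3}\bigr| = 2.33887\ldots$, and bounds the tail geometrically by $0.0354$, well within your predicted margin. The only point worth noting is that with $B-A=1.5$ the conclusion of Lemma~\ref{lemma1} rests on the strictness of the inequality (an integer strictly less than $3$ is at most $2$), which you correctly identify as the reason there is no slack to spare.
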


\begin{proof}
 Once again we set
\[
S_L =    \sum_{0 \leq k \leq 13}2 \, p_k\, {\rm Re}\Bigl(a_{\alpha}(\frac{1}{\alpha^3} - \frac{1}{\beta^3})    \alpha^k   \Bigr), \qquad
S_R=     \sum_{k \geq 14}  2 \, p_k\,  {\rm Re}\Bigl(a_{\alpha}(\frac{1}{\alpha^3} - \frac{1}{\beta^3}) \alpha^k   \Bigr).
\]
As above, it can be verified that 
\[
-0.8371  <   S_L < 0.5764.
\]
We compute 
\begin{equation} \label{eqc}
\left|  \frac{1}{\alpha^3} - \frac{1}{\beta^3}  \right| =  2.33887\ldots, 
\end{equation}
and thus
\[
|S_R| \leq 2 |a_{\alpha} | \cdot \left|  \frac{1}{\alpha^3}  - \frac{1}{\beta^3}   \right| \sum_{k\geq 14} |\alpha|^k = 
2 |a_{\alpha} | \cdot \left|  \frac{1}{\alpha^3}  - \frac{1}{\beta^3}   \right|   \frac{|\alpha |^{14}}{1 -|\alpha|} <  0.0354.
\]
The claim follows from these inequalities.
\end{proof}

Propositions \ref{prop1}--\ref{prop3} together with Lemma~\ref{lemma1} imply that $\tribo$ is $2$-balanced.

\begin{remark} None of the inequalities in the claims  of Propositions \ref{prop1}--\ref{prop3} are optimal. \end{remark}

\section{\label{S:ab-complexity}Abelian complexity}
In this section we study the Abelian complexity of the Tribonacci word and prove Theorem~\ref{triboabcom}.
We begin by recalling  the following basic fact from \cite{RichommeSaariZamboni2009abelian}: 
\begin{fact}\label{F:basic}
If an infinite word $\bw$ has two factors $u$ and $v$ of the same length $n$ for which the $i$th entry of the Parikh vector are p and p+c respectively, then for each $\ell = 0, \ldots, c$, there exists a factor $u_\ell$ of $\bw$ of length $n$  such that the $i$th entry of $\Psi(u_\ell)$ is equal to $p+\ell$.
\end{fact}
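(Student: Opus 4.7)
The plan is to prove Fact~\ref{F:basic} by a discrete intermediate value theorem argument applied to the sliding window of length $n$ in $\bw$. The key observation is that as the window slides by one position we delete one letter on the left and append one letter on the right, so the $i$th coordinate of the Parikh vector of the window changes by at most $1$ in absolute value. Hence the sequence of these coordinates, indexed by the starting position of the window, is a ``discretely continuous'' integer-valued sequence and therefore attains every integer value between any two of its values.

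More concretely, for each $j\geq 0$ let $w_j$ denote the factor $\bw_j\bw_{j+1}\cdots \bw_{j+n-1}$ of length $n$ starting at position $j$, and define $f:\nats\rightarrow\nats$ by $f(j) = |w_j|_a$ where $a$ is the letter corresponding to the chosen coordinate~$i$. The identity $|w_{j+1}|_a - |w_j|_a \in \{-1,0,1\}$ (with the sign depending on whether $\bw_j = a$ or $\bw_{j+n} = a$) is immediate. Since $u$ and $v$ occur in $\bw$, choose positions $j_1,j_2$ with $w_{j_1}=u$ and $w_{j_2}=v$; then $f(j_1)=p$ and $f(j_2)=p+c$.

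I would then argue that for every $\ell\in\{0,1,\ldots,c\}$ there exists $j$ in the closed interval with endpoints $j_1$ and $j_2$ such that $f(j)=p+\ell$. This is the standard discrete intermediate value principle: if a function on consecutive integers changes by at most one at each step and takes values $p$ and $p+c$ at the endpoints, it must take every integer value between. Setting $u_\ell := w_j$ gives a factor of length $n$ whose $i$th Parikh entry equals $p+\ell$.

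There is essentially no obstacle here; the only mild subtlety is that we do not know a priori whether $j_1<j_2$ or $j_1>j_2$, but the discrete intermediate value argument is symmetric in its endpoints, so this case distinction is harmless. Thus the proof should amount to little more than recording the one-step change property of $f$ and invoking this symmetric intermediate value step.
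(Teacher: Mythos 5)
Your proof is correct and is exactly the standard sliding-window argument with the discrete intermediate value principle; the paper itself gives no proof of this Fact, merely recalling it from the cited preprint \cite{RichommeSaariZamboni2009abelian}, and your argument is surely the intended one. The one-step change property $|w_{j+1}|_a - |w_j|_a \in \{-1,0,1\}$ and the symmetric treatment of the endpoints $j_1, j_2$ are both handled properly, so there is nothing to add.
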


\noindent Henceforth, we denote $\abp$ the Abelian complexity of the Tribonacci word $\tribo.$

\begin{figure}\begin{center}
\includegraphics[height=6cm]{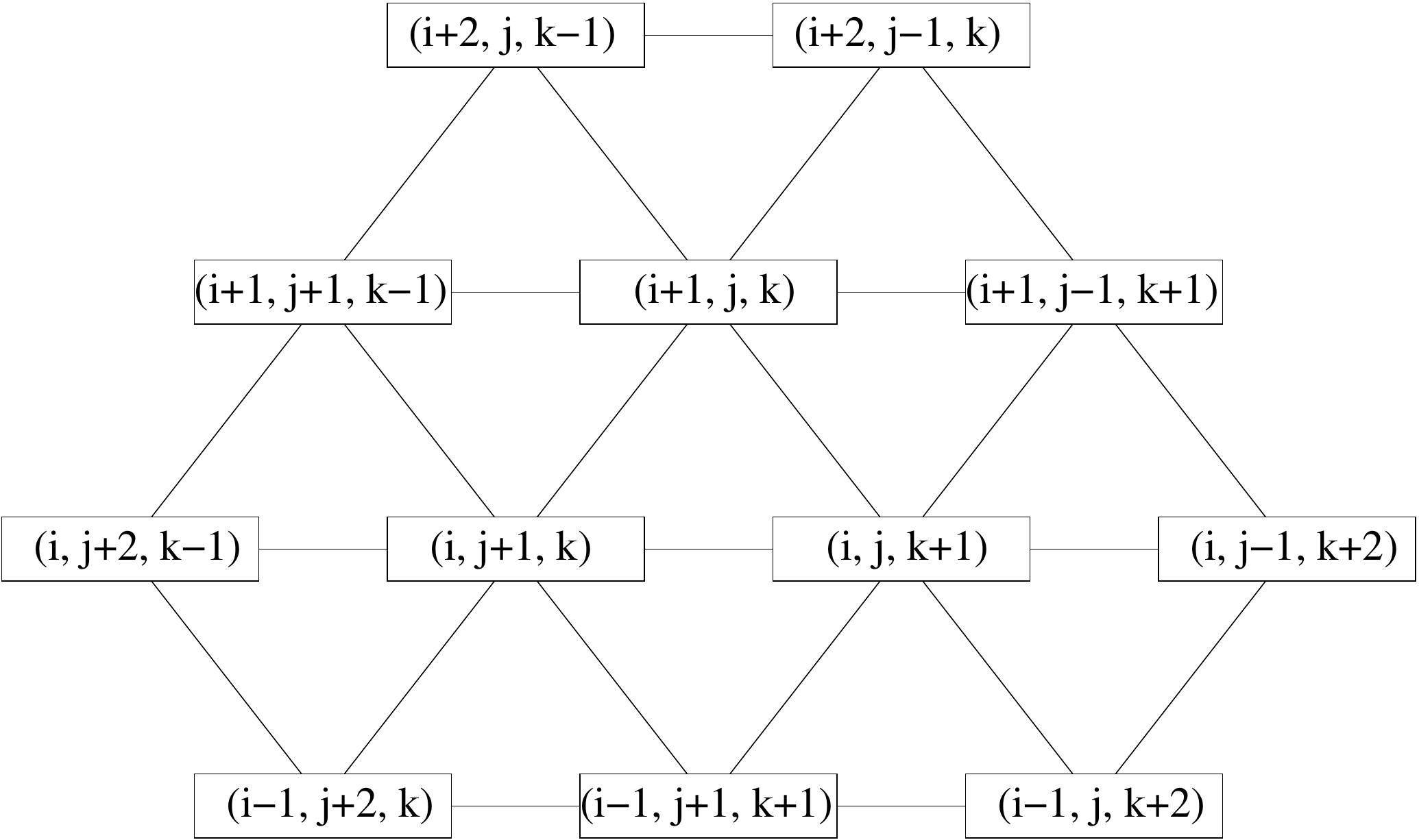}
\caption{\label{graph}Links between Parikh vectors}\end{center}
\end{figure}

\subsection{Proof of Theorem~\ref{triboabcom}}

Let us recall that a factor $u$ of an infinite word $\omega$ is \textit{right} (resp. \textit{left}) \textit{special} if there exist distinct letters $a$ and $b$ such that the words $ua$ and $ub$ (resp. $au$, $bu$) are both factors of $\omega$. A factor which is both left and right special  is called  \textit{bispecial}. 

It is well-known that for every $n \geq 1,$ $\tribo$ has exactly one right special factor of length $n-1$, and that, for this special factor that we denote $\tribo^<_{n-1}$, the three words $\tribo^<_{n-1}0, \tribo^<_{n-1}1,$ and $\tribo^<_{n-1}2$ are each factors of $\tribo$ of length $n$. Writing $\Psi(\tribo^<_{n-1})=(i,j,k),$ where $i,j,k$ are each non-negative integers, we define
$${\rm Central}(n) = \{(i+1,j,k), (i,j+1,k),(i,j,k+1)\}.$$
We have 
\begin{equation}\label{su}{\rm Central}(n) \subseteq \Psi_{\tribo}(n).\end{equation}

Given a vector $\overrightarrow{v} = (\alpha, \beta, \gamma)$, set $||\overrightarrow{v}|| = 
\max(|\alpha|, |\beta|, |\gamma|)$. The set of vectors $\overrightarrow{v}$ such that $||\overrightarrow{v}  - \overrightarrow{u}|| \leq 2$ for all $\overrightarrow{u}$ in ${\rm Central}(n)$ is given by the graph in Figure~\ref{graph}    (whose vertices are vectors, and  where two vertices $\overrightarrow{u}, \overrightarrow{v}$ are joined by an undirected edge if and only if   $||\overrightarrow{v}  - \overrightarrow{u}|| = 1$).

Since $\tribo$ is 2-balanced, $\Psi_\tribo(n)$ is a subset of this set of twelve vectors. Moreover for the same reason, we should have $||\overrightarrow{v}  - \overrightarrow{u}|| \leq 2$ for all $\overrightarrow{u}$, $\overrightarrow{v}$ in $\Psi_\tribo(n)$. This implies that the only possibility for $\Psi_\tribo(n)$ is to be a subset of one of the three sets delimited by a regular hexagon in Figure~\ref{graph}, or one of the three sets delimited by an equilateral triangle of base length 2. These sets have cardinalities 7 and 6 respectively showing that $\abp_\omega(n) \leq 7$.
\,\,\,\,\,\,\,\,\,$\Box$
%}

\medskip
%}

\begin{remark}By computer simulation we find that
\[(\abp(n))_{n\geq 1}=3 3 4 3 4 4 4 3 4 4 4 4 4 4 3 4 4 4 4 4 4 4 4 4 4 4 4 3 4 5 5 4 4 4 4 4 5 5 4 4 4 4\ldots\]
In particular, the least $n$ for which $\abp(n)=5$ is for $n=30.$ We also found that the smallest $n$ for which $\abp(n)=6$ is
$n=342,$ and the smallest $n$ for which  $\abp(n)=7$ is $n=3914.$ The next four values of $n$ for which $\abp(n)=7$ are $n= 4063, 4841,  4990, 7199.$ 
\end{remark}

\subsection{More on the Abelian complexity of the Tribonacci Word}

 In this section, we characterize those $n$ for which $\abp_{\tribo}(n)=3.$

We continue to adopt the notation  $\tribo^<_{n-1}$, $\Psi(\tribo^<_{n-1})$, ${\rm Central}(n)$,  and $||\overrightarrow{v}||$ introduced in the proof of Theorem~\ref{triboabcom}.
Let $B(n)$ be the set of vectors $\overrightarrow{v}$ for which there is exactly one $\overrightarrow{u}$ in ${\rm Central}(n)$ with 
$||\overrightarrow{v}  - \overrightarrow{u}|| = 2$:
\begin{equation}\label{B(n)}B(n)=\{(i-1,j+1,k+1),(i+1,j-1,k+1),(i+1,j+1,k-1)\}.\end{equation}

\begin{proposition}\label{equivalent} The following are equivalent:
\begin{enumerate} 
\item[(1)] For all factors $v$ and $w$ of $\tribo$ of length $n,$ we have
$\bigl||v|_a-|w|_a\bigr|\leq 1$ for all $a\in \{0,1,2\}.$ 
\item[(2)] $\abp_{\tribo}(n)=3.$ 
\item[(3)] $\Psi_{\tribo}(n)\cap B(n)=\emptyset.$
\item[(4)] $\tribo$ contains a bispecial special factor of length $n-1.$
\item[(5)] $n=1,$ or $n=\frac12(T_m+T_{m+2} -1)$ for $m\geq 0,$ where $(T_m)_{m\geq 0}=1,2,4,7,13,24,\ldots$ denotes the sequence of Tribonacci numbers.
\end{enumerate}
\end{proposition}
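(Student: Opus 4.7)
My plan is to establish the five-fold equivalence via three blocks: $(1)\Leftrightarrow(2)\Leftrightarrow(3)$, $(4)\Leftrightarrow(5)$, and $(2)\Leftrightarrow(4)$.

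For the first block, $(1)\Leftrightarrow(2)$ follows from an elementary observation: among vectors in $\mathbb{Z}_{\ge 0}^3$ with common coordinate-sum $n$, the condition that all pairwise coordinate-wise differences are at most one forces them to lie in a translate of $\{e_0,e_1,e_2\}$; combined with the containment ${\rm Central}(n)\subseteq \Psi_\tribo(n)$, this gives $\abp_\tribo(n)=3\Leftrightarrow \Psi_\tribo(n)={\rm Central}(n)\Leftrightarrow \text{(1)}$. The implication $(2)\Rightarrow (3)$ is immediate since ${\rm Central}(n)\cap B(n)=\emptyset$ by inspection. For $(3)\Rightarrow (2)$, I would use Theorem~\ref{2bal} to confine $\Psi_\tribo(n)$ to the twelve-vertex graph of Figure~\ref{graph}; with $B(n)$ excluded by $(3)$, I must rule out the six ``outer'' vectors at $\max$-norm distance two from exactly two central vectors, e.g.\ $(i+2,j-1,k)$. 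For each hypothetical outer vector $\vec v\in \Psi_\tribo(n)$ I would pair $\vec v$ against a central vector that differs in two separate coordinates by two, apply Fact~\ref{F:basic} in each of those coordinates, and then combine the resulting intermediate values using the sum constraint $|u|_0+|u|_1+|u|_2=n$ to force the existence of a factor whose Parikh vector lies in $B(n)$---contradicting $(3)$.

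For $(4)\Leftrightarrow(5)$, I would invoke the classical fact that bispecial factors of an Arnoux--Rauzy word coincide with its palindromic prefixes; indexing these prefixes of $\tribo$ as $p_0=\varepsilon$, $p_1=0$, $p_2=010$, $p_3=0102010$, and so on, they admit a recursive construction via iterated right palindromic closure (consistent with the directive sequence $(012)^{\omega}$ of $\tribo$), from which a direct induction yields $|p_{m+1}|=\sum_{k=0}^{m}T_k=(T_m+T_{m+2}-3)/2$ for $m\ge 0$. Translating to $n=|p_k|+1$ gives exactly the list in $(5)$, with $n=1$ corresponding to $p_0=\varepsilon$.

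For $(2)\Leftrightarrow(4)$, suppose first that $u=\tribo^<_{n-1}$ is bispecial, hence palindromic. Then in addition to the three right-extensions $u0,u1,u2$ with Parikh in ${\rm Central}(n)$, the three left-extensions $0u,1u,2u$ also lie in $\Psi_\tribo(n)$ and (by reversal-invariance of Parikh) contribute the same three vectors; I would then argue, inductively along the $\tau$-hierarchy, that the remaining length-$n$ factors---each of the form $av$ for a length-$(n-1)$ factor $v\neq u$ with unique left-extension $av$---also have Parikh in ${\rm Central}(n)$, by descending through the morphism $\tau$ to a smaller bispecial length where the assertion follows from the inductive hypothesis. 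Conversely, if $u$ is right-special but not left-special, then the reverse $u^R\neq u$ is the distinct left-special factor of length $n-1$; comparing the three left-extensions $0u^R,1u^R,2u^R$ (all factors by the Arnoux--Rauzy three-extension property) with the three right-extensions $u0,u1,u2$ will produce, using the non-palindromicity of $u$, at least one length-$n$ factor whose Parikh falls outside ${\rm Central}(n)$, contradicting $(2)$. The main obstacle I anticipate is $(3)\Rightarrow(2)$, where extracting a specific $B(n)$-vector from the existence of an outer vector requires coupling two applications of Fact~\ref{F:basic} in distinct coordinates via the sum constraint; the six outer cases are essentially symmetric but must each be treated. A secondary difficulty is the inductive descent in $(4)\Rightarrow(2)$, where the substitutive structure via $\tau$ must be invoked to transfer the bispecial-based Parikh constraint from length $n$ down to a smaller length scale.
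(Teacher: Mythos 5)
Your blocks $(1)\Leftrightarrow(2)$ and $(4)\Leftrightarrow(5)$ are fine and coincide with the paper's argument. The genuine gap is $(3)\Rightarrow(2)$. You propose to eliminate each ``outer'' vector by applying Fact~\ref{F:basic} in two distinct coordinates and then coupling the results via the sum constraint, but Fact~\ref{F:basic} is an intermediate-value statement for a \emph{single} coordinate: the factor realizing the intermediate value of $|\cdot|_0$ and the factor realizing the intermediate value of $|\cdot|_1$ need not be the same factor, and nothing couples them. Worse, no purely local argument of this kind can succeed: writing $\Psi(\tribo^<_{n-1})=(i,j,k)$, the set
\[
\{(i+1,j,k),\ (i,j+1,k),\ (i,j,k+1),\ (i+2,j-1,k)\}
\]
contains ${\rm Central}(n)$, is disjoint from $B(n)$, has all pairwise differences of max-norm at most $2$ (so is consistent with $2$-balance), and is even closed under single sliding-window steps, since the outer vector is at max-norm distance $1$ from the central vector $(i+1,j,k)$. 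Hence hypothesis $(3)$ together with $2$-balance and Fact~\ref{F:basic} does not force $\abp_{\tribo}(n)=3$; the substitutive structure of $\tribo$ must enter. This is exactly why the paper routes the implication as $(3)\Rightarrow(4)\Rightarrow(2)$: a descent lemma (if $\Psi_{\tribo}(n)\cap B(n)\neq\emptyset$ then $\Psi_{\tribo}(\phi(n))\cap B(\phi(n))\neq\emptyset$, with $\phi(n)=|\tau(\tribo^<_{n-1})0|+1$) shows by induction that $(3)$ forces $\tribo^<_{n-1}$ to be bispecial, and a separate nine-case desubstitution argument (Lemma~\ref{phi}, built on Lemma~\ref{L0}) shows that $\abp(n)=3$ implies $\abp(\phi(n))=3$, giving $(4)\Rightarrow(2)$ by induction along the hierarchy of bispecial prefixes.

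A second gap is your proposed $(2)\Rightarrow(4)$: if $u=\tribo^<_{n-1}$ is right-special but not palindromic, the left-special factor of length $n-1$ is indeed $u^R$, but the left extensions $au^R$ have Parikh vectors $\Psi(u^R)+\Psi(a)=\Psi(u)+\Psi(a)$, i.e.\ exactly the three vectors of ${\rm Central}(n)$ again; non-palindromicity is invisible at the level of Parikh vectors, so comparing the two families of extensions produces no factor outside ${\rm Central}(n)$. The paper instead obtains the needed contradiction from explicit witnesses: if $\tribo^<_{N-1}$ begins with $1$ (resp.\ $2$), then $1^{-1}\tribo^<_{N-1}20$ (resp.\ $2^{-1}\tribo^<_{N-1}10$) is a factor of length $N$ whose Parikh vector lies in $B(N)$, and otherwise it desubstitutes through $\tau$ and inducts. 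Your $(4)\Rightarrow(2)$ sketch points in the right direction, but the ``inductive descent along the $\tau$-hierarchy'' is precisely where all the work lies and is not supplied.
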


\begin{proof} Equivalence $(1)\Leftrightarrow (2)$ is an immediate consequence of Eq.~\eqref{su}.
To see the equivalence between items (4) and (5),  we note that the bispecial factors of $\tribo$ are precisely the palindromic prefixes of $\tribo$ (see \cite{RiZa}). The lengths of these words are known to be 
\[\frac{T_m+(T_{m-1}+T_m) + (T_{m-2}+T_{m-1}+T_m)-3}2=\frac{T_m+T_{m+2}-3}2\]
(see Corollary 3.11 in \cite{deLuZam} or Corollary 3.3 in \cite{WozZam}).
It follows that $(4)\Leftrightarrow (5).$ Thus it remains to prove that items (2), (3), and (4) are equivalent. We will make use of the following lemmas.

\begin{lemma}\label{imb} Let $n\geq 1,$ and let $v$ be a factor of $\tribo .$ Put
$\Psi(\tribo^<_{n-1})=(i,j,k),$ and $\Psi(v)=(i',j',k').$ Suppose either
\begin{itemize}
\item $|v|\geq n$ and either $i'\leq i-1$ or $j'\leq j-1$
\end{itemize}
or
\begin{itemize}
\item $|v|\leq n,$ and either $i'\geq i+2$ or $j'\geq j+2.$
\end{itemize}
Then $\abp(n)>3.$
\end{lemma}

\begin{proof} The first condition implies the existence of a factor
$v'$ of $\tribo$ of length $n$ with either $|v'|_0\leq i-1$ or
$|v'|_1\leq j-1.$ By  \eqref{su} it follows that the factors of $\tribo$ of length $n$ are not balanced. Similarly, the second condition implies the existence of a factor
$v'$ of $\tribo$ of length $n$ with either $|v'|_0\geq i+2$ or
$|v'|_1\geq j+2.$ Again by  \eqref{su} it follows that the factors of $\tribo$ of length $n$ are not balanced.
\end{proof}

\noindent For each $n\geq 1,$ let $\phi(n)=|\tau(\tribo^<_{n-1})0|+1.$

\begin{lemma}\label{phi} If $\abp(n)=3,$ then $\abp(\phi(n))=3.$
\end{lemma}

\begin{proof} Assume $\abp(n)=3,$ and set $N=\phi(n).$ Let $\Psi(\tribo^<_{n-1})=(i,j,k),$ and
$\tribo^<_{N-1}=(I,J,K).$  Since $\tribo^<_{N-1}=\tau(\tribo^<_{n-1})0,$ it follows that 
\begin{equation}\label{equi}I=n;\,\,\,\, J=i;\,\,\,\, K=j.\end{equation}
Now we have 
\[\Psi_{\tribo}(n)=\{(i+1,j,k),(i,j+1,k),(i,j,k+1)\}\]
and
\[\{(I+1,J,K),(I,J+1,K),(I,J,K+1)\}\subseteq \Psi_{\tribo}(N)\]
and we must show that in fact we have equality in the previous containment.
By Theorem~\ref{2bal}, the only other possible Parikh vectors of factors of $\bt$ of length $N$ are 
%Using Lemma~\ref{imb} we will show that if $\Psi_{\tribo}(N)$ contains any 
one of the following six vectors
\[\left(\begin{matrix} I-1\\J\\K+2\end{matrix}\right); \left(\begin{matrix} I-1\\J+2\\K\end{matrix}\right);\left(\begin{matrix} I\\J-1\\K+2\end{matrix}\right);\left(\begin{matrix} I\\J+2\\K-1\end{matrix}\right);\left(\begin{matrix} I+2\\J-1\\K\end{matrix}\right);\left(\begin{matrix} I+2\\J\\K-1\end{matrix}\right)\]
or one of the following three vectors
\[\left(\begin{matrix} I+1\\J+1\\K-1\end{matrix}\right);\left(\begin{matrix} I+1\\J-1\\K+1\end{matrix}\right);\left(\begin{matrix} I-1\\J+1\\K+1\end{matrix}\right).\]
Using Lemma~\ref{imb} we will show that if any one of these nine vectors belong to $\Psi_\bt(n)$, 
then $\abp(n)>3$, a contradiction. Hence $\abp(N)=3.$ We proceed one vector at a time.

\vspace{.1 in}

\noindent $\diamond$ Suppose $(I-1,J,K+2)\in \Psi_{\tribo}(N).$ By Lemma~\ref{L0} there exists a factor $v$ of $\tribo$ with $\Psi(v)=(i',j',k')$ and $\delta \in \{-1,0,1\}$  such that
\[
\begin{array}{rl}
I-1&=|v|+\delta \\
J&=i'\\
K+2&=j'.
\end{array}
\]

It follows from \eqref{equi} that $|v|=n-1-\delta \leq n,$ and $j'= j+2$ which contradicts Lemma~\ref{imb}.

\vspace{.1 in}

\noindent $\diamond$ Suppose $(I-1,J+2,K)\in \Psi_{\tribo}(N).$ By Lemma~\ref{L0} there exists a factor $v$ of $\tribo$ with $\Psi(v)=(i',j',k')$ and $\delta \in \{-1,0,1\}$  such that
\[
\begin{array}{rl}
I-1&=|v|+\delta \\
J+2&=i'\\
K&=j'.
\end{array}
\]

It follows from \eqref{equi} that $|v|=n-1-\delta \leq n,$ and $i'= i+2$ which contradicts Lemma~\ref{imb}.

\vspace{.1 in}

\noindent $\diamond$ Suppose $(I,J-1,K+2)\in \Psi_{\tribo}(N).$ By Lemma~\ref{L0} there exists a factor $v$ of $\tribo$ with $\Psi(v)=(i',j',k')$ and $\delta \in \{-1,0,1\}$  such that
\[
\begin{array}{rl}
I&=|v|+\delta \\
J-1&=i'\\
K+2&=j'.
\end{array}
\]

It follows from \eqref{equi} that $|v|=n-\delta \leq n+1,$ $i'= i-1,$ and $j'=j+2$ which contradicts Lemma~\ref{imb}.

\vspace{.1 in}

\noindent $\diamond$ Suppose $(I,J+2,K-1)\in \Psi_{\tribo}(N).$ By Lemma~\ref{L0} there exists a factor $v$ of $\tribo$ with $\Psi(v)=(i',j',k')$ and $\delta \in \{-1,0,1\}$  such that
\[
\begin{array}{rl}
I&=|v|+\delta \\
J+2&=i'\\
K-1&=j'.
\end{array}
\]

It follows from \eqref{equi} that $|v|=n-\delta \leq n+1,$  $i'= i+2,$ and $j'=j-1$ which contradicts Lemma~\ref{imb}.

\vspace{.1 in}

\noindent $\diamond$ Suppose $(I+2,J-1,K)\in \Psi_{\tribo}(N).$ By Lemma~\ref{L0} there exists a factor $v$ of $\tribo$ with $\Psi(v)=(i',j',k')$ and $\delta \in \{-1,0,1\}$  such that
\[
\begin{array}{rl}
I+2&=|v|+\delta \\
J-1&=i'\\
K&=j'.
\end{array}
\]

It follows from \eqref{equi} that $|v|=n+2-\delta \geq n+1,$ and $i'= i-1$ which contradicts Lemma~\ref{imb}.

\vspace{.1 in}

\noindent $\diamond$ Suppose $(I+2,J,K-1)\in \Psi_{\tribo}(N).$ By Lemma~\ref{L0} there exists a factor $v$ of $\tribo$ with $\Psi(v)=(i',j',k')$ and $\delta \in \{-1,0,1\}$  such that
\[
\begin{array}{rl}
I+2&=|v|+\delta \\
J&=i'\\
K-1&=j'.
\end{array}
\]

It follows from \eqref{equi} that $|v|=n+2-\delta \geq n+1,$ and $j'= j-1$ which contradicts Lemma~\ref{imb}.

\noindent This concludes the first six cases. Now we consider the last three:

\vspace{.1 in}

\vspace{.1 in}

\noindent $\diamond$ Suppose $(I+1,J+1,K-1)\in \Psi_{\tribo}(N).$ By Lemma~\ref{L0} there exists a factor $v$ of $\tribo$ with $\Psi(v)=(i',j',k')$ and $\delta \in \{-1,0,1\}$  such that
\[
\begin{array}{rl}
I+1&=|v|+\delta \\
J+1&=i'\\
K-1&=j'.
\end{array}
\]

It follows from \eqref{equi} that $|v|=n+1-\delta \geq n,$ and $j'= j-1$ which contradicts Lemma~\ref{imb}.

\noindent $\diamond$ Suppose $(I+1,J-1,K+1)\in \Psi_{\tribo}(N).$ By Lemma~\ref{L0} there exists a factor $v$ of $\tribo$ with $\Psi(v)=(i',j',k')$ and $\delta \in \{-1,0,1\}$  such that
\[
\begin{array}{rl}
I+1&=|v|+\delta \\
J-1&=i'\\
K+1&=j'.
\end{array}
\]

It follows from \eqref{equi} that $|v|=n+1-\delta \geq n,$ and $i'= i-1$ which contradicts Lemma~\ref{imb}.

\vspace{.1 in}

\noindent $\diamond$ Suppose $(I-1,J+1,K+1)\in \Psi_{\tribo}(N).$ By Lemma~\ref{L0} there exists a factor $v$ of $\tribo$ with $\Psi(v)=(i',j',k')$ and $\delta \in \{-1,0,1\}$  such that
\[
\begin{array}{rl}
I-1&=|v|+\delta \\
J+1&=i'\\
K+1&=j'.
\end{array}
\]

It follows from \eqref{equi} that $|v|=i'+j'+k'=n-1-\delta =i+j+k-\delta,$ $i'=i+1,$ and $j'= j+1.$ 
Thus we have $k'=k-2-\delta.$ If $\delta =-1,$ then $(i+1,j+1,k-1)\in \Psi_{\tribo}(n)$ contradicting that $\abp(n)=3.$  If $\delta =0,$ then 
$|v|= n-1$ and $k'=k-2.$ Thus any factor of $\tribo$ of length $n$ beginning
in $v$ will have at most $k-1$ many $2$'s, contradicting that the balance between any two factors of length $n$ is at most one (there is a factor of length $n$ with Parikh vector $(i, j, k+1)$). If $\delta =1,$ then $|v|=n-2$ and $k'=k-3.$ Thus any factor of length $n$ beginning in $v$ will have at most $k-2$ many $2$'s,  contradicting Theorem~\ref{2bal}.

\noindent This concludes the proof of the lemma.
\end{proof}

\begin{lemma} If $\Psi_{\tribo}(n)\cap B(n)\neq \emptyset,$ then $\Psi_{\tribo}(\phi(n))\cap B(\phi(n))\neq \emptyset.$
\end{lemma}

\begin{proof} Since $\Psi(\tribo^<_{n-1})=(i,j,k),$ we have that $\phi(n)=n+i+j +1.$  Following \eqref{B(n)},  if $\Psi (v)=(i-1,j+1,k+1)$ for some factor $v$ of $\tribo$ of length $n,$ then
$\Psi(\tau(v)0)=(n+1, i-1,j+1)\in \Psi_{\tribo}(\phi(n))\cap B(\phi(n)).$
If  $\Psi (v)=(i+1,j-1,k+1)$ for some factor $v$ of $\tribo$ of length $n,$ then
$\Psi(\tau(v)0)=(n+1, i+1,j-1)\in \Psi_{\tribo}(\phi(n))\cap B(\phi(n)).$
Finally if $\Psi (v)=(i+1,j+1,k-1)$ for some factor $v$ of $\tribo$ of length $n,$ then
$\Psi(0^{-1}\tau(v))=(n-1, i+1,j+1)\in \Psi_{\tribo}(\phi(n))\cap B(\phi(n)).$
\end{proof}

%\framed{G: In next paragraph, i removed "either\ldots" since the empty word is bispecial}
\begin{lemma} If $\Psi_{\tribo}(n)\cap B(n)= \emptyset,$ then $\tribo^<_{n-1}$ is 
%either the empty word (in case $n=1)$ or 
a bispecial factor of $\tribo.$\end{lemma}

\begin{proof} We proceed by induction on $n.$ The result is clear for $n=1,2.$
Let $N>2,$ and suppose the result is true for all $n<N.$ Assume
$\Psi_{\tribo}(N)\cap B(N)= \emptyset .$ Set $\Psi (\tribo ^<_{N-1})=(I,J,K).$ Since $\tribo ^<_{N-1}$ is right special, it ends to  $0.$ We now show that $\tribo ^<_{N-1}$ also begins in $0.$ Suppose $\tribo ^<_{N-1}$ begins in $1.$ Then, $1^{-1}\tribo ^<_{N-1}20$ is a factor of $\tribo$ of length $N,$ and $\Psi (1^{-1}\tribo ^<_{N-1}20)=(I+1,J-1,K+1)\in \Psi_{\tribo}(N)\cap B(N)$ contrary to our assumption.  Similarly, assume $\tribo ^<_{N-1}$ begins in $2.$ Then, $2^{-1}\tribo ^<_{N-1}10$ is a factor of $\tribo$ of length $N,$ and $\Psi (2^{-1}\tribo ^<_{N-1}10)=(I+1,J+1,K-1)\in \Psi_{\tribo}(N)\cap B(N)$ contrary to our assumption. 

Having established that $\tribo ^<_{N-1}$ begins in $0,$ we can write $\tribo ^<_{N-1}=\tau(u)0$ for some factor $u$ of $\tribo.$ Also, as $\tribo^<_{N-1}$ is right special, so is $u$ and hence $u=\tribo^<_{n-1}$ for some $n,$ and so $N=\phi(n).$  It follows from the previous lemma that $\Psi_{\tribo}(n)\cap B(n) =\emptyset .$ Hence by induction hypothesis we have
that $\tribo^<_{n-1}$ is  bispecial, and hence so is $\tau(\tribo^<_{n-1})0=\tribo^<_{N-1}.$

 \end{proof}
 
 We are now ready to establish the remaining equivalences in \ref{equivalent}:
 We will show that $(3)\Rightarrow (4)\Rightarrow (2)\Rightarrow (3).$ The previous lemma states precisely that $(3)\Rightarrow (4).$ 
 That $(2)\Rightarrow (3)$ is clear from \eqref{su} and \eqref{B(n)}.
 Finally to see that $(4)\Rightarrow (2),$ we proceed by induction on $n.$ 
 The result is clear for $n=1,2.$ Let $N>2$ and suppose the result is true for all $n<N.$ Assume $\tribo^<_{N-1}$ is  bispecial. Then as $\tribo^<_{N-1}$ begins and ends in $0,$ we can write $\tribo^<_{N-1}=\tau(u)0$ for some factor $u$ of $\tribo.$ Moreover as $\tribo^<_{N-1}$ is bispecial, so is $u.$ It follows that $u=\tribo^<_{n-1}$ for some $n,$ and hence $N=\phi(n).$ By induction hypothesis we have that $\abp(n)=3.$ It now follows from Lemma~\ref{phi} that $\abp(N)=3$ as required. This completes our proof of Proposition~\ref{equivalent}.
\end{proof}

As an immediate consequence of  Proposition~\ref{equivalent}, we have that
$\abp(n)=3$ for infinitely many values of $n.$ We next show that the same is true for $\abp(n)=7.$

\begin{proposition}
\label{L:value7}
The Abelian complexity of the Tribonacci word attains the value $7$ infinitely often.
\end{proposition}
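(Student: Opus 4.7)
The plan is to promote the computational fact from the preceding Remark, namely $\abp(3914)=7$, into an infinite family via the operator $\phi(n)=|\tau(\tribo^<_{n-1})0|+1$ already introduced in Section~\ref{S:ab-complexity}. Concretely, the goal is to establish the persistence lemma
\[
\abp(n)=7 \;\Longrightarrow\; \abp(\phi(n))=7,
\]
together with the easy observation that $\phi$ is strictly increasing on the positive integers (since $\phi(n)=n+i+j+1$ with $(i,j,k)=\Psi(\tribo^<_{n-1})$ and $i+j\geq 0$). Iterating from $n_0=3914$ then yields a strictly increasing sequence $n_0<n_1<n_2<\cdots$ with $\abp(n_k)=7$ for all $k\geq 0$, which proves the proposition. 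As a sanity check, the frequency approximation $\phi(3914)\approx 3914(1+1/\beta+1/\beta^2)\approx 7199$ matches a value in the list from the Remark.

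To prove the persistence lemma, observe that Theorem~\ref{triboabcom} gives the upper bound $\abp(\phi(n))\leq 7$ for free, so only the lower bound $\abp(\phi(n))\geq 7$ needs argument. The strategy dualises Lemma~\ref{phi}: instead of tracking the three vectors of Central$(n)$ and ruling out the nine outer ones, here I would start from the seven vectors forming a hexagon in $\Psi_\tribo(n)$ and push them forward to length $\phi(n)$. For each $\overrightarrow{p}\in\Psi_\tribo(n)$, pick a witness factor $u$ of length $n$ with $\Psi(u)=\overrightarrow{p}$; using the surrounding context of $u$ in $\tribo$ together with the boundary presentations $\tau(u)$, $\tau(u)0$, $0^{-1}\tau(u)$, $0^{-1}\tau(u)0$ from Lemma~\ref{L0}, produce a factor of $\tribo$ of length exactly $\phi(n)$ whose Parikh vector is $M\overrightarrow{p}+\epsilon$, where $M$ is the incidence matrix of $\tau$ and $\epsilon\in\{0,\pm(1,0,0)\}$ is the controlled boundary correction.

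The main obstacle is verifying that the seven images are pairwise distinct and that, as a set, they form one of the three hexagons available in $\Psi_\tribo(\phi(n))$. Distinctness is plausible because $M$ is an invertible integer matrix (determinant $1$), the source hexagon has $L^\infty$-diameter $2$, and the corrections $\epsilon$ have norm at most $1$, so no two images can coincide. The hexagon structure then requires a case analysis in three cases, one per possible hexagon shape at length $n$, anchored by the identity $\tribo^<_{\phi(n)-1}=\tau(\tribo^<_{n-1})0$ already recorded in Eq.~\eqref{equi}, which synchronises the central triples at lengths $n$ and $\phi(n)$. Once this tracking is carried through, the argument closes in the same spirit as Lemma~\ref{phi}, and no further computational input beyond $\abp(3914)=7$ is needed. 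Note that the argument does not claim every $n$ with $\abp(n)=7$ arises as $\phi^k(3914)$—indeed the values $4063,4841,4990$ in the Remark show this is false—but this is immaterial for producing infinitely many.
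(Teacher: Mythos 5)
Your overall plan---iterate the map $\phi$ starting from $n_0=3914$---is a legitimate strategy, and the numerical coincidence $\phi(3914)=7199$ is encouraging; but the proposal rests entirely on the persistence implication $\abp(n)=7\Rightarrow\abp(\phi(n))=7$, which you do not prove, and the sketch you give of it breaks at a concrete point. Write $\Psi(\tribo^<_{n-1})=(i,j,k)$ and let $u$ be a factor of length $n$ with $\Psi(u)=(i',j',k')$; then $|\tau(u)|=n+i'+j'=2n-k'$, while the target length is $\phi(n)=n+i+j+1=2n-k$, so the length deficit is $k'-k$. If $\Psi_\tribo(n)$ is the hexagon centred at $(i,j,k+1)$, it contains the vectors $(i-1,j,k+2)$ and $(i,j-1,k+2)$, for which the deficit is $2$: you must lengthen $\tau(u)$ by two letters, and you can only do so while adding two $0$'s if some occurrence of $u$ is adjacent to the letter $2$, which is not guaranteed. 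Otherwise the correction is forced to be $(1,1,0)$ or $(1,0,1)$, outside your set $\{0,\pm(1,0,0)\}$, and then your distinctness argument genuinely fails: for instance $M\cdot(i-1,j,k+2)+(1,1,0)=(n+1,i,j)=M\cdot(i,j,k+1)+(1,0,0)$, so the images of two distinct source vectors can coincide. Both the length normalisation and the injectivity step therefore need repair, and it is not clear the repair is routine; as it stands the persistence lemma remains unestablished.

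The paper sidesteps all of this with a much shorter argument worth comparing with yours. Since $\tribo$ begins with the square $\tau^{n+3}(00)$, every factor $y$ of length $m=3914$ occurring in $\tau^{n+3}(0)$ (which is all of them once $n$ is large enough, by uniform recurrence) extends to the left inside that square to a factor $xy$ in which $x$ is a conjugate of $\tau^{n+3}(0)$; all conjugates share the Parikh vector $\Psi(\tau^{n+3}(0))$, so $\Psi(y)\mapsto\Psi(\tau^{n+3}(0))+\Psi(y)$ is an injection of $\Psi_\tribo(m)$ into $\Psi_\tribo(T_{n+3}+m)$. Hence $\abp(T_{n+3}+m)\geq\abp(m)=7$, and Theorem~\ref{triboabcom} supplies the reverse inequality. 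This uses only the single computation $\abp(3914)=7$ and requires no tracking of Parikh vectors under $\tau$; I would recommend adopting it, or at least proving your persistence lemma in full before relying on it.
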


\begin{proof}
Note that $\tribo$ begins with the square $(0102010)^2 = \tau^3(00)$  and so with the squares $\tau^{n+3}(00)$. 
Consider any integer $n \geq 0$ for which all factors of length $m = 3914$ occur in $\tau^{n+3}(0)$ (we recall that a computer computation showed that $\abp(3914) = 7$).
For any factor $y$ of length $m$ there exists a conjugate $x$ of $\tau^{n+3}(0)$ such that $xy$ is a factor of $\tau^{n+3}(00)$. Since $\Psi(x) = \Psi(\tau^{n+3}(0))$ and since $\Psi(xy) = \Psi(x) + \Psi(y)$, we deduce that $\mbox{Card}\Psi_\tribo(|\tau^{n+3}(0)|+m) \geq \mbox{Card}\Psi_\tribo(m) = 7$. Since the Abelian complexity of the Tribonacci word is bounded by $7$, we get 
$\mbox{Card}\Psi_\tribo(|\tau^{n+3}(0)|+m)  = 7$.
\end{proof}

\section{\label{S:conclusion}Conclusion}

Various aspects of the Abelian complexity of the Tribonacci word remain a mystery. For instance, it is surprising to us that the value $\abp_{\tribo}(n)=7$ does not occur until $n=3914,$ but then re-occurs relatively shortly thereafter. As another example, it is verified that for all $n\leq 184,$ if $U$ and $V$ are factors of $\tribo$ of length $n,$ with $U$ a prefix of $\tribo,$ then $||U|_a-|V|_a|\leq 1,$ for all $a\in \{0,1,2\}.$ But then this property fails for $n=185.$
We list a few related open questions:

\begin{problem}
Does the Abelian complexity of the Tribonacci word attain each value in $\{4, 5, 6\}$ infinitely often?
\end{problem}

\begin{problem}
For each value $m\in \{4, 5, 6, 7\},$ characterize those $n$ for which $\abp(n)=m.$
\end{problem}

Another problem concerns the $m$-bonacci word, the generalization of the Tribonacci word (and of the celebrated Fibonacci word) to an alphabet on $m$ letters. This word is defined as the fixed point of the morphism $\tau_m: \{0, \ldots, m-1\}^*\to \{0, \ldots, m-1\}^*$ defined by $\tau_m(i) =0(i+1)$ for $0 \leq i \leq m-2$ and $\tau(m-1) = 0$. 

\begin{problem}
Prove or disprove that the $m$-bonacci word is $(m-1)$-balanced.
\end{problem}

\bibliographystyle{plain}
\bibliography{RSZshort}

\begin{thebibliography}{10}


\bibitem{Adam} B.~Adamczewski, Balances for fixed points of primitive substitutions, {\it Theoret. Comput. Sci.} {\bf 307} (2003), p. 47--75. 


\bibitem{ArBeIt} P.~Arnoux, V.~Berth\'e, S.~Ito, Discrete
planes, $\ints ^2$-actions, Jacobi-Perron algorithm and
substitutions, {\it Ann. Inst. Fourier (Grenoble)} {\bf 52} (2002), p. 305--349.



\bibitem{ArRa}
P.~Arnoux and G.~Rauzy,
\newblock Repr\'esentation g\'eom\'etrique de suites de complexit\'e $2n+1,$
\newblock {\em Bull. Soc. Math. France}, {\bf 119} (1991), p. 199--215.

\bibitem{Ber1}
J. ~Berstel, 
\newblock Recent results on extensions of Sturmian words, 
\newblock International Conference on Geometric and Combinatorial Methods in Group Theory and Semigroup Theory (Lincoln, NE, 2000),
\newblock {\em Internat. J. Algebra Comput.} {\bf 12} (2002), no. 1--2,  p. 371--385.

\bibitem{Ber2}
J. ~Berstel, 
\newblock Sturmian and episturmian words (a survey of some recent results), 
\newblock {\em CAI 2007, Lecture Notes in Computer Science} {\bf 4728} (2007), p. 23--47.



\bibitem{CassFerZam}
J.~Cassaigne, S.~Ferenczi, and L.Q.~Zamboni,
\newblock Imbalances in {A}rnoux-{R}auzy sequences,
\newblock {\em Ann. Inst. Fourier (Grenoble)}, {\bf 50} (2000), no. 4 p. 1265--1276.


  
  \bibitem{deLuZam}
A.~de Luca and L.Q.~Zamboni,
\newblock On graphs of central episturmian words,
\newblock  {\em Theoret. Comput. Sci.}, to appear, (2009).

\bibitem{FeHoZa1} S.~Ferenczi, C.~Holton, L.Q.~Zamboni,
The structure of three--interval exchange  transformations I:
an arithmetic study,
 {\it Ann. Inst. Fourier (Grenoble)}  {\bf 51} (2001), p. 861--901.

 \bibitem{FeHoZa2} S.~Ferenczi, C.~Holton, L.Q.~Zamboni,
The structure of three--interval exchange  transformations II:
a combinatorial description of the trajectories, 
 {\it J. Analyse Math.} {\bf 89} (2003), p. 239--276.

  \bibitem{FeHoZa3} S.~Ferenczi, C.~Holton, L.Q.~Zamboni,
 The structure of three-interval exchange  transformations III: 
 ergodic and spectral properties,
 {\it J. Analyse 
Math.} {\bf 93} (2004), p. 103--138.

  \bibitem{FeHoZa4} S.~Ferenczi, C.~Holton, L.Q.~Zamboni,  Joinings of 
three-interval exchange transformations, {\it Ergodic Theory \& Dynam. Systems}
{\bf 25} (2005), p. 483--502.


\bibitem{GlenJustin2009}
A. Glen and J. Justin,
\newblock Episturmian words: A survey,
\newblock {\em RAIRO -Theor. Inform.  Appl.}, {\bf 43} (2009), p. 402--433.

\bibitem{Lothaire2002}
M.~Lothaire,
\newblock {\em Algebraic {C}ombinatorics on {W}ords}, volume~90 of {\em
  Encyclopedia of Mathematics and its Applications},
\newblock Cambridge University Press, 2002.

\bibitem{Lothaire2005}
M.~Lothaire,
\newblock {\em Applied {C}ombinatorics on {W}ords}, volume~105 of {\em
  Encyclopedia of Mathematics and its Applications},
\newblock Cambridge University Press, 2005.


  \bibitem{MoHe1} M.~Morse and G.A. Hedlund,
  \newblock Symbolic dynamics, 
  \newblock {\sl Amer. 
J. Math.} {\bf 60} (1938), p. 815--866.

\bibitem{MorHed1940}
M.~Morse and G.A. Hedlund,
\newblock Symbolic {D}ynamics {II}: {S}turmian trajectories,
\newblock {\em Amer. J. Math.}, {\bf 62}  (1940), p. 1--42.

  
  \bibitem{Ra1} G. Rauzy,  
  \newblock Une g\'en\'eralization du developpement en 
fraction continue, 
\newblock {\sl S\'eminaire de Th\'eorie des Nombres} Ann\'ee 
1975--1977, Paris.

\bibitem{Ra2} G. Rauzy,  
\newblock \'Echanges d'intervalles et transformations 
induites, 
\newblock {\sl Acta Arith.,} {\bf 34} (1979), p. 315--328.

\bibitem{Ra3} G. Rauzy,  Nombres alg\'ebriques et substitutions, {\it 
Bull. Soc. Math. France},
{\bf 110} (1982), p. 147--178.


\bibitem{RichommeSaariZamboni2009abelian}
G.~Richomme, K.~Saari, and L.Q. Zamboni,
\newblock Abelian complexity and Abelian powers,
\newblock (2009), preprint.

\bibitem{RiZa}
R.~Risley and L.Q.~Zamboni,
\newblock A generalization of {S}turmian sequences: combinatorial structure and
  transcendence,
\newblock {\em Acta Arith.}, {\bf 95} (2000), no. 2,  p. 167--184.

\bibitem{Vuillon2003BBMS}
L.~Vuillon,
\newblock Balanced words,
\newblock {\em Bull. Belg. Math. Soc. Simon Stevin}, {\bf 10} (2003), p. 787--805.

\bibitem{WozZam}
N.~Wozny and L.Q.~Zamboni,
\newblock Frequencies of factors in Arnoux-Rauzy sequences,
\newblock {\em Acta Arith.}, {\bf 96} (2001), no. 3,  p. 261--278.


\end{thebibliography}

\end{document}